\tikzset{
vertex1/.style ={circle, inner sep=1.5pt, fill=white, draw},
vertex2/.style ={circle, inner sep=1.5pt, fill=black, draw},
Sedge/.append style={dashed, shorten <=6pt, shorten >=6pt},
Redge/.append style={->,>=stealth', shorten <=6pt, shorten >=6pt},
Fedge/.append style={dotted, thick, shorten <=6pt, shorten >=6pt},
Ledge/.append style={->,>=stealth', dashed, shorten <=6pt, shorten >=6pt},
Nedge/.append style={->,>=stealth', shorten <=6pt, shorten >=6pt},
LLedge/.append style={dashed, shorten >=6pt, shorten <=6pt},
NNedge/.append style={shorten <=6pt, shorten >=6pt},
every loop/.append style={min distance=12mm,shorten <=6pt, shorten >=6pt}
}
\def\th@plain{\slshape}                                        %
\newcommand{\Zbb}{\mathbb{Z}}
\newcommand{\Qbb}{\mathbb{Q}}
\newcommand{\Rbb}{\mathbb{R}}
\newcommand{\ud}{\,\mathrm{d}}
\newcommand{\pp}{_{>0}}
\newcommand{\m}{^{-1}}
\newcommand{\TTcal}{\mathcal{TT}}
\newcommand{\Gcal}{\mathcal{G}}
\newcommand{\Scal}{\mathcal{S}}
\newcommand{\Acal}{\mathcal{A}}
\newcommand{\Fcal}{\mathcal{F}}
\newcommand{\Lcal}{\mathcal{L}}
\newcommand{\Mcal}{\mathcal{M}}
\newcommand{\Zcal}{\mathcal{Z}}
\newcommand{\abf}{\mathbf{a}}
\newcommand{\bbf}{\mathbf{b}}
\newcommand{\Zbf}{\mathbf{Z}}
\newcommand{\Ybf}{\mathbf{Y}}
\newcommand{\Xbf}{\mathbf{X}}
\mathchardef\mhyphen="2D
\newcommand{\SRScal}{SR\mhyphen\mathcal{S}}
\newcommand{\LNScal}{LN\mhyphen\mathcal{S}}
\newcommand{\newword}[1]{\textsl{#1}}
\newcommand{\vect}[3]{#1_#2,\ldots ,#1_#3}
\newcommand{\angles}[1]{\langle #1 \rangle}
\newcommand{\set}[1]{\{ #1 \}}
\newcommand{\ppmatrix}[4]{\bigl(\begin{smallmatrix}#1&#2\\#3&#4\end{smallmatrix}\bigr)}
\newcommand{\bbmatrix}[4]{\bigl[\begin{smallmatrix}#1&#2\\#3&#4\end{smallmatrix}\bigr]}
\DeclareMathSymbol{\upharpoonright}{\mathrel}{AMSa}{"16}
\DeclareMathOperator{\PGL}{PGL}
\DeclareMathOperator{\PSL}{PSL}
\DeclareMathOperator{\PP}{P}
\theoremstyle{plain}
\newtheorem{theorem}{Theorem}[section]
\newtheorem{lemma}[theorem]{Lemma}
\newtheorem{corollary}[theorem]{Corollary}
\theoremstyle{definition}
\newtheorem{definition}[theorem]{Definition}
\newtheorem{remark}[theorem]{Remark}
\newtheorem{example}[theorem]{Example}
\begin{document}

\bibliographystyle{plain}

\sloppy

\title[The Serret theorem]{Slow continued fractions, transducers,\\
and the Serret theorem}

\author[]{Giovanni Panti}
\address{Department of Mathematics\\
University of Udine\\
via delle Scienze 206\\
33100 Udine, Italy}
\email{giovanni.panti@uniud.it}

\begin{abstract}
A basic result in the elementary theory of continued fractions says that two real numbers share the same tail in their continued fraction expansions iff they belong to the same orbit under the projective action of $\PGL_2\Zbb$. This result was first formulated in
Serret's \emph{Cours d'alg\`ebre sup\'erieure}, so we'll refer to it as to the Serret theorem.

Notwithstanding the abundance of continued fraction algorithms in the literature, a uniform treatment of the Serret result seems missing. In this paper we show that there are finitely many possibilities for the groups $\Sigma\le\PGL_2\Zbb$ generated by the branches of the Gauss maps in a large family of algorithms, and that each $\Sigma$-equivalence class of reals is partitioned in finitely many tail-equivalence classes, whose number we bound. Our approach is through the finite-state transducers that relate Gauss maps to each other. They constitute opfibrations of the Schreier graphs of the groups, and their synchronizability ---which may or may not hold--- assures the a.e.~validity of the Serret theorem.
\end{abstract}

\keywords{Continued fractions, Gauss maps, tail property, extended modular group, transducers.}

\thanks{\emph{2010 Math.~Subj.~Class.}: 11A55; 37A45.}

\maketitle

\section{Introduction}\label{ref1}

Let $\alpha$, $\beta$ be irrational numbers, with infinite regular continued fraction expansions $[0,a_1,a_2,\ldots]$, $[0,b_1,b_2,\ldots]$, respectively. It is a classical fact that these expansions have the same tail (i.e., there exists $t_1,t_2\ge0$ such that $a_{t_1+n}=b_{t_2+n}$ for every $n\ge1$) if and only if $\alpha$ and $\beta$ are conjugated by an element of the extended modular group $\PGL_2\Zbb$. This result first appeared as \S16 of the third edition (1866) of
Serret's \emph{Cours d'alg\`ebre sup\'erieure}~\cite{serret66}, 
the second edition (1854) making no mention of continued fractions; easily accessible modern references are~\cite[\S10.11]{hardywri85}, \cite[\S9.6]{leveque77}, \cite[\S2.7]{borwein_et_al14}.
An equivalent reformulation is that $\alpha, \beta$ (without loss of generality in the real unit interval) are in the same $\PGL_2\Zbb$-orbit iff they have the same eventual orbit under the \newword{Gauss map} $T:x\mapsto x\m-\lfloor x\m\rfloor$ (see Figure~\ref{fig1} right). The key point here is that the c.~f.~expansion of, say, $\alpha$ is nothing else than its $T$-symbolic orbit: $T^t(\alpha)\in \bigl[ (a_{t+1}+1)\m,a_{t+1}\m \bigr)\phantom{]}$ for every $t\ge0$.
We refer to \cite[Chapter~7]{CornfeldFomSi82}, \cite[Chapter~3]{einsiedlerward} and references therein for the interpretation of continued fractions in terms of dynamical systems. 

Besides the regular ``Floor'' one, a great number of continued fraction algorithms appear in the literature, the complex of them forming a large passacaglia on the theme of the euclidean algorithm. As a definitely incomplete list we cite the Ceiling, Nearest Integer, Even, Odd, Farey fractions~\cite{baladivallee05}, the $\alpha$-fractions~\cite{nakada81}, \cite{arnouxschmidt13}, and the $(a,b)$-fractions~\cite{katokugarcovici12}, not to mention algorithms with coefficients in rings of algebraic integers and multidimensional continued fractions. Asking for the status of the Serret result for these systems is then quite natural.

In this paper we give a fairly complete answer for the algorithms in a certain specific class, namely the class of accelerations of Gauss-type maps arising from finite unimodular partitions of a unimodular interval. After setting notation and stating a few well known facts, we introduce our class in \S\ref{ref3}; we provide various explicit examples, showing that our class, albeit nonexhaustive, contains many important and much studied algorithms. It is a fortunate fact that the validity ---or lack of it--- of the Serret property is untouched by the acceleration process, so that we can restrict to ``slow'' algorithms. In~\S\ref{ref4} we associate a graph $\Gcal_T$ to each such algorithm $T$, and show that $\Gcal_T$ is an opfibration of the Schreier graph of the group $\Sigma_T$ generated by the branches of $T$. The r\^ole of $\Sigma_T$ is clearly crucial; indeed, if $\alpha$, $\beta$ have the same eventual $T$-orbit then they must necessarily be $\Sigma_T$-equivalent. Thus, the key question becomes ``In how many tail-equivalence classes is partitioned a given $\Sigma_T$-equivalence class?'', the Serret property amounting to the constant answer ``Precisely one''. In~\S\ref{ref12} we show that the index of each $\Sigma_T$ in $\PGL_2\Zbb$ is at most~$8$, so that there are finitely many possibilities for these groups. In~\S\ref{ref15} we introduce finite-state transducers, and employ them in two ways: in Lemma~\ref{ref18} to relate different algorithms to each other, and in Lemma~\ref{ref29} to compute the expansion of a rational function of $\alpha$ directly from the expansion of $\alpha$; neither use is new, see~\cite[\S3.5]{grabinerlagarias01} for the first and~\cite{raney73}, \cite{liardetstambul98} for the second.
In Theorem~\ref{ref20} we answer the question cited above: every $\Sigma_T$-equivalence class is partitioned in finitely many tail-equivalence classes, whose number is bounded by the \newword{defect} of the algorithm. We also give an explicit criterion, Corollary~\ref{ref27}, for deciding the validity of the Serret property for a given $T$. In the final~\S\ref{ref5} we relate the synchronizability of the graph~$\Gcal_T$ to the almost-everywhere (w.r.t.~the Lebesgue measure) validity of the Serret theorem.

Before defining our class, we fix notation and recall a few well-known facts.
We denote the group $\PGL_2\Zbb$ and its index-$2$ subgroup $\PSL_2\Zbb$ by $\Pi$ and $\Gamma$, respectively. We set names for certain elements of $\Pi$, using square brackets to emphasize that matrices are taken up to multiplication by $\pm1$:
\[
L=\begin{bmatrix}
1&\\
1&1
\end{bmatrix},\;
N=\begin{bmatrix}
1&1\\
&1
\end{bmatrix},\;
S=\begin{bmatrix}
&-1\\
1&
\end{bmatrix},\;
R=\begin{bmatrix}
1&-1\\
1&
\end{bmatrix},\;
F=\begin{bmatrix}
&1\\
1&
\end{bmatrix}.
\]
The following facts are well known:
\begin{enumerate}
\item $\Gamma=\angles{L,N}=\angles{S,R}$; it has the presentation $\angles{s,r|s^2=r^3=1}$ and hence is isomorphic to $Z_2 * Z_3$.
\item The set of elements of $\Gamma$ having nonnegative entries is the free monoid $\Mcal$ on the two free generators $L,N$.
\item $\Pi=\angles{S,R,F}$ with the presentation $\angles{s,r,f|s^2=r^3=f^2=1, fs=sf, fr=r^2f}$; it is isomorphic to $\angles{S,F}*_{\angles{F}}\angles{R,F}\simeq D_2*_{Z_2}D_3$.
\item The automorphism group of $\Gamma$ is $\Pi$, acting by conjugation. The outer automorphism group of $\Pi$ has order $2$, and is generated by the involution
\[
\alpha:\begin{cases}
F \leftrightarrow F,\\
S \leftrightarrow \bbmatrix{-1}{}{}{1},\\
R \leftrightarrow R^2.
\end{cases}
\]
Note that $\Gamma$ is not $\alpha$-invariant. Moreover,
the usual distinction of elements of $\Gamma$ in elliptic, parabolic, and hyperbolic (according to the absolute value of the trace being less than, equal, or greater than $2$) is destroyed: for example, $\alpha$ exchanges the parabolic $N^2$ with the hyperbolic $\bbmatrix{1}{1}{1}{2}$.
\item $\Pi$ acts on $\PP^1\Rbb$ in the standard projective way: if $A=\bbmatrix{a}{b}{c}{d}$, then $A*\alpha=(a\alpha+b)/(c\alpha+d)$. If $\Sigma$ is a subgroup of $\Pi$ and $\alpha,\beta$ are in the same $\Sigma$-orbit, then we say that $\alpha$ and $\beta$ are \newword{$\Sigma$-equivalent}.
\end{enumerate}

\section{Gauss-type maps and the groups they generate}\label{ref3}

We identify (actually, we define) a continued fraction algorithm with the corresponding Gauss-type map; in our setting, the latter are defined as follows. Let us say that an interval with rational endpoints $[p/q,p'/q']$ is \newword{unimodular} if $\det\ppmatrix{p}{p'}{q}{q'}=-1$. A \newword{unimodular partition} of the base unimodular interval $\Delta=[0,\infty]=[0/1,1/0]$ is a finite family $\set{\vect\Delta0{{n-1}}}$ (of cardinality at least $2$) of unimodular intervals such that $\bigcup_a\Delta_a=\Delta$ and distinct $\Delta_a$'s intersect at most in a common endpoint; we always assume that $\vect\Delta0{{n-1}}$ are listed in consecutive order, with $0\in\Delta_0$ and $\infty\in\Delta_{n-1}$.
For every index $a$, we choose arbitrarily $e_a\in\set{+1,-1}$.

\begin{definition}
The \newword{slow continued fraction algorithm} determined by the family of pairs $\set{(\Delta_a,e_a)}$ as above is the map $T:\Delta\to\Delta$ which is induced on $\Delta_a=[p/q,p'/q']$ by the matrix
\[
A_a^{-1}:=
\begin{bmatrix}
&1\\
1&
\end{bmatrix}^{(e_a+1)/2}
\begin{bmatrix}
p&p'\\
q&q'
\end{bmatrix}^{-1}.
\]
If $\Delta_a$ and $\Delta_b$ are consecutive and $e_a=e_b$, then the above definition is ambiguous in their common vertex $p'/q'$. In this case we consider $T$ as a multivalued map, admitting both $0$ and $\infty$ as $T$-images of $p'/q'$; this ambiguity may occur at most once along the $T$-orbit of a point, necessarily rational.
\end{definition}

We usually specify $T$ by providing the finite set $\set{\vect A0{{n-1}}}$ of matrices in~$\Pi$ with nonnegative entries whose inverses determine $T$. Note that the defining intervals $\Delta_a$ are recovered by $\Delta_a=A_a*\Delta$. We identify matrices with the maps they induce, and we say that the $A_a$'s are the \newword{inverse branches} of $T$. 

At least one of the extreme points $0,\infty$ is inevitably an indifferent fixed point either for $T$ or for $T^2$ (\newword{indifferent} means that the one-sided derivative at the point is $+1$). One removes such points by accelerating the algorithm, as follows.

\begin{definition}
Let\label{ref25} $T$ be as above, let $0\le i\le j\le n-1$, and let $E=E(i,j)=\bigcup\set{\Delta_a:i\le a \le j}$; we allow the possibility of removing one or both endpoints from the interval $E$.
The \newword{first-return map}, or \newword{accelerated continued fraction algorithm}, is the map $T_E:E\to E$ defined as follows. Given $x\in E$, let $r(x)=\min\set{t\ge1:T^t(x)\in E}\in[1,2,3,\ldots,+\infty]$, and set
\[
T_E(x)=\begin{cases}
T^{r(x)}(x), & \text{if $r(x)\not=+\infty$};\\
\text{undefined}, & \text{otherwise}.
\end{cases}
\]
\end{definition}

\begin{example}
The\label{ref7} set $\set{LF,N}$ (equivalently, the set $\set{([0,1],-1),
([1,\infty],+1)}$)
determines the map whose graph is in Figure~\ref{fig1} left. Since we want $\infty$ to appear as an ordinary point, we draw graphs by conjugating $E$ to $[0,1]$ via an appropriate projective transformation, in this case $L:E=\Delta\to [0,1]$.
The fixed point $\infty$ is indifferent, and by inducing on $E=[0,1)$ we get the usual Gauss map of Figure~\ref{fig1} right, whose inverse branches are $\set{LFN^a:a\ge0}=\bigl\{\bbmatrix{}{1}{1}{a}:a\ge1\bigr\}$.
\begin{figure}[h!]
\includegraphics[width=5cm]{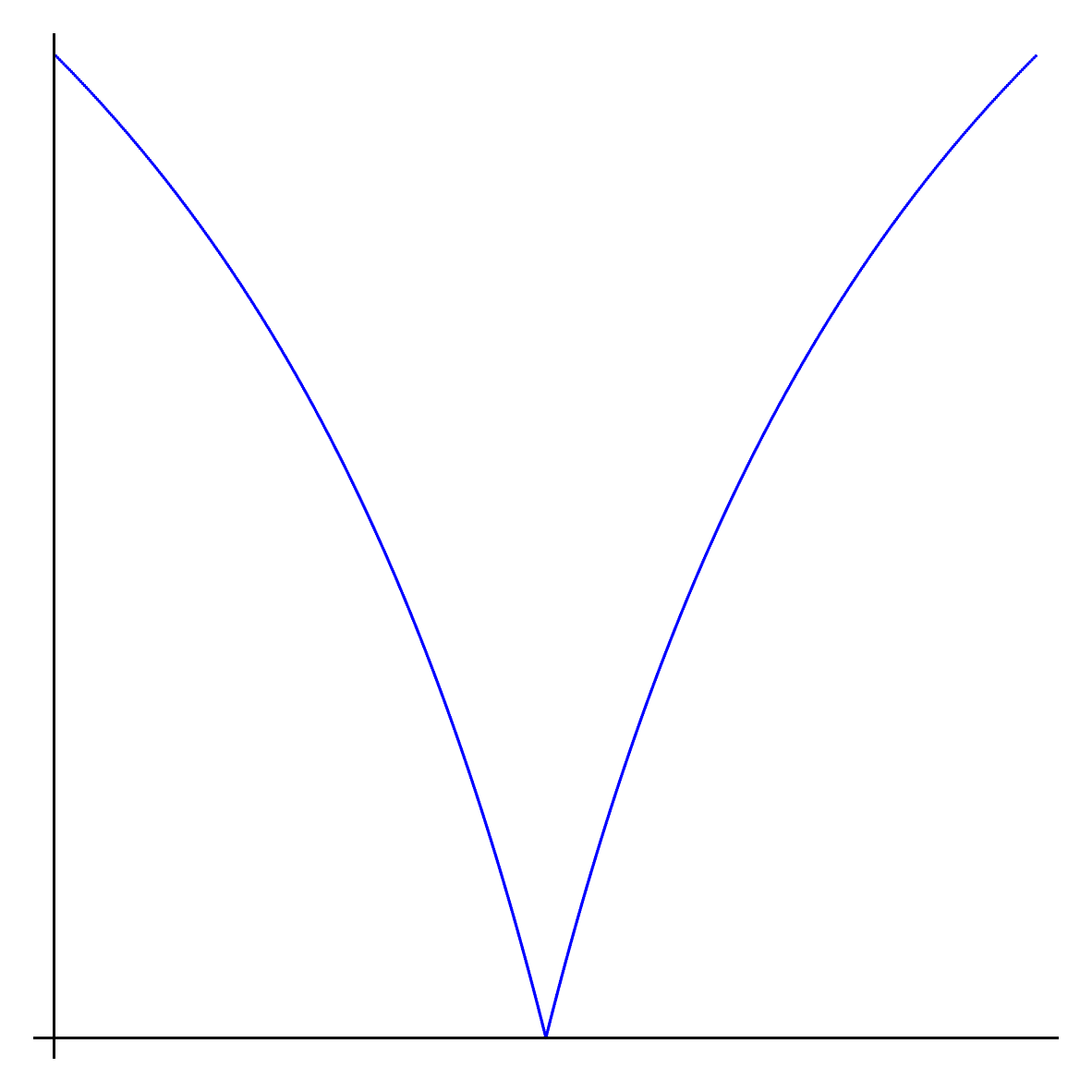}\qquad
\includegraphics[width=5cm]{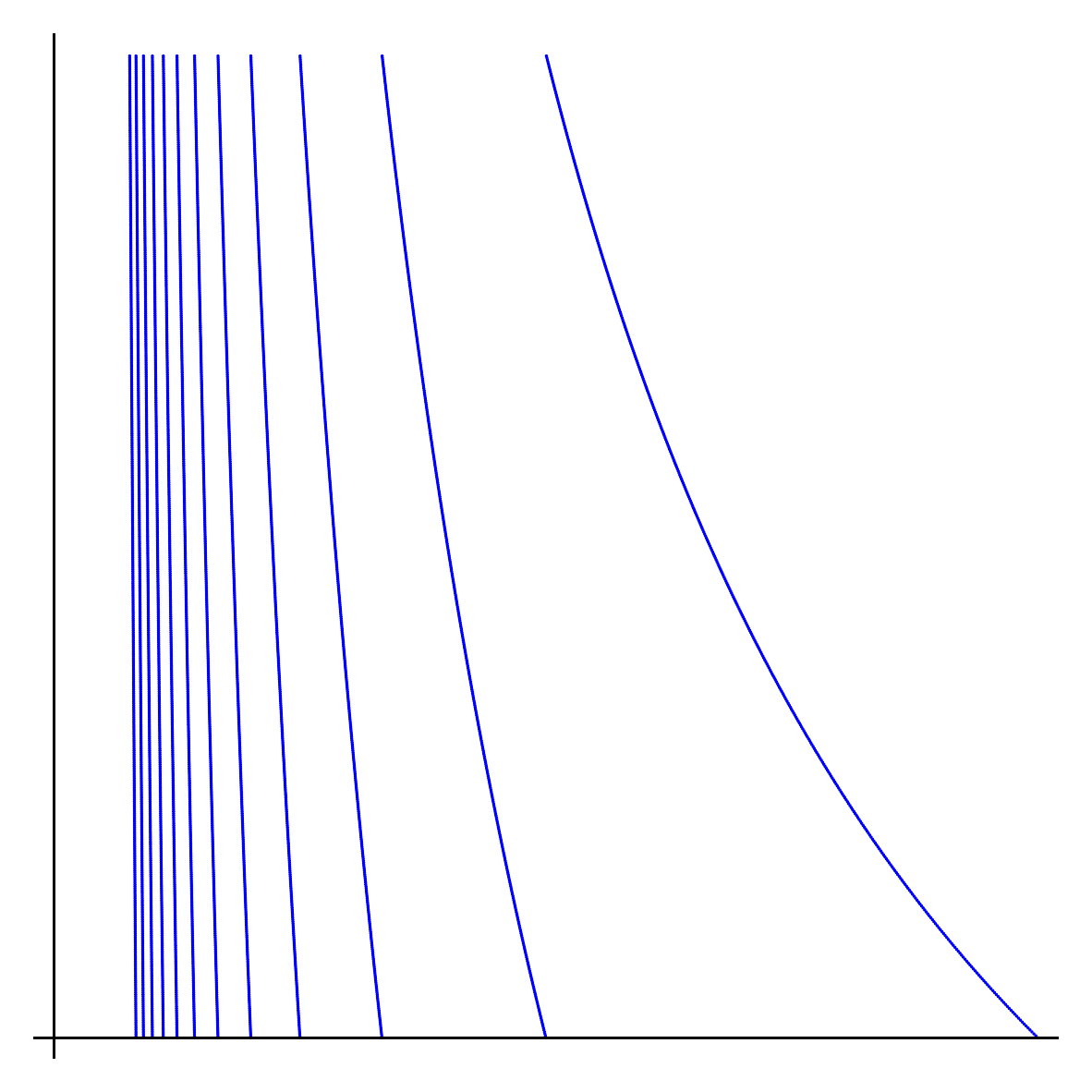}
\caption{}
\label{fig1}
\end{figure}
\end{example}

\begin{example}
Conjugating\label{ref6} the slow algorithm of Example~\ref{ref7} by $F$ we get
the Farey map $\set{L,NF}$ on $[0,\infty]$; see, e.g., \cite{heersink16} and references therein.
Inducing on $(1,\infty]$ we get a conjugated Gauss map with inverse branches $\bigl\{\bbmatrix{a}{1}{1}{}:a\ge1\bigr\}$, whose finite products give the classical matrices $\bbmatrix{p_n}{p_{n-1}}{q_n}{q_{n-1}}$.
\begin{figure}[h!]
\includegraphics[width=5cm]{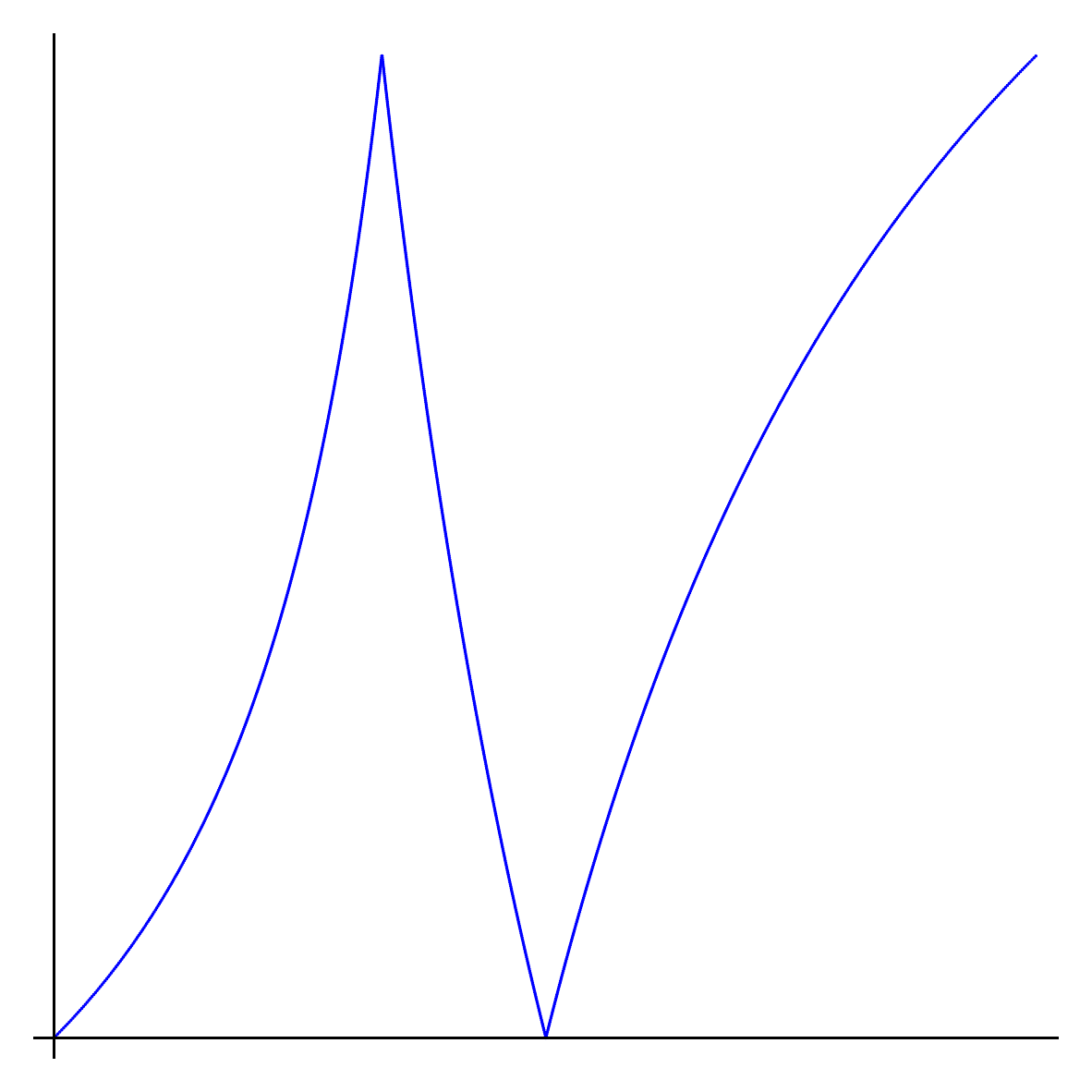}\qquad
\includegraphics[width=5cm]{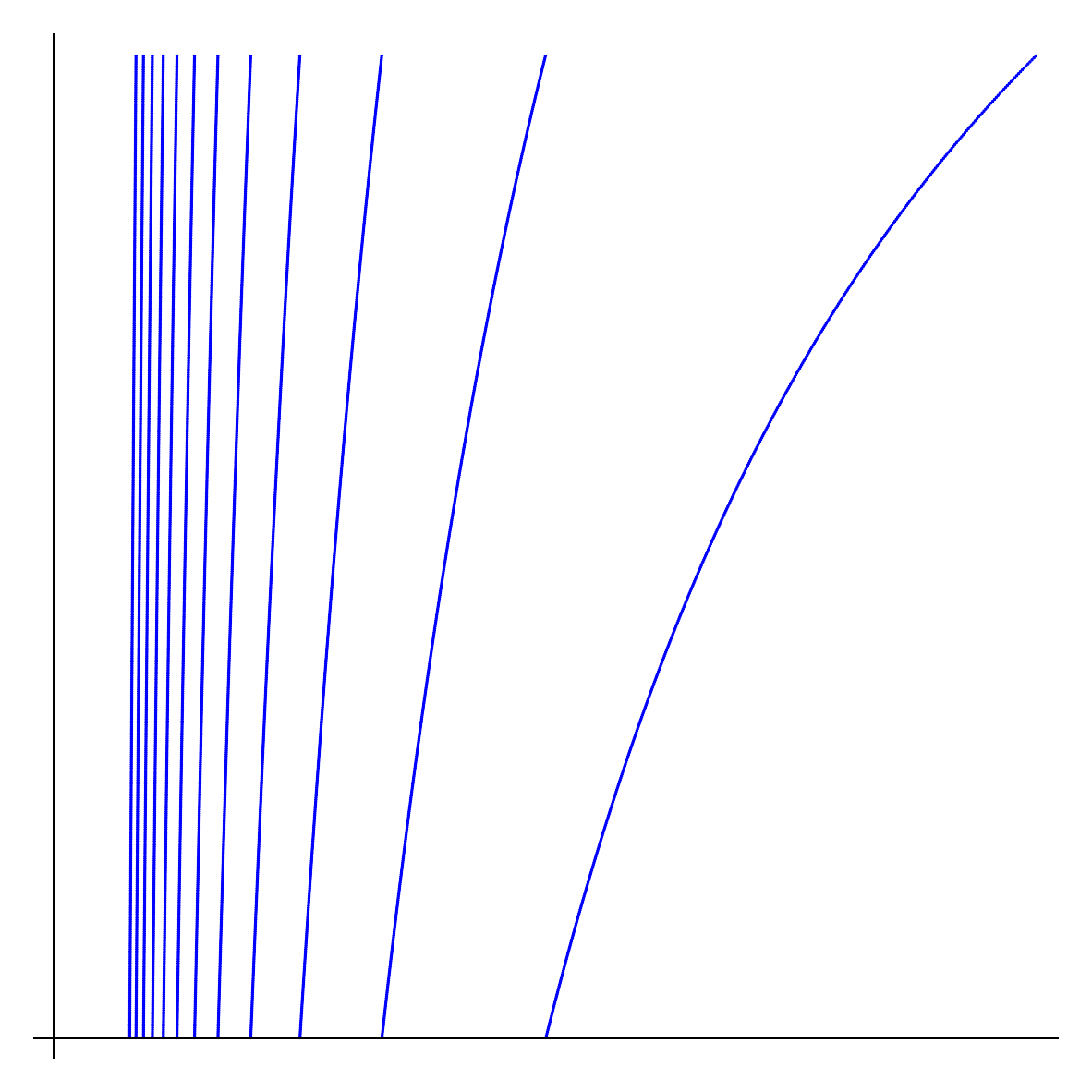}
\caption{}
\label{fig2}
\end{figure}
\end{example}

\begin{example}
Starting\label{ref16} from $\set{L,N}$ and inducing on $[1,\infty]$ we get Zagier's ceiling algorithm (Figure~\ref{fig2} right).
Inducing on $[0,1]$ we get R\'enyi's algorithm; the two are of course conjugated by $F$. See~\cite{zagier75}, \cite{pinner01}, \cite{iommi10} and references therein.
\end{example}

\begin{example}
The~\label{ref24} map $\set{LL,LNF,N}$ in Figure~\ref{fig2} left enumerates all pythagorean triples~\cite{romik08}.
\end{example}

\begin{example}
The maps $\set{LL,LNF,N}$ and $\set{LL,LN,NF}$ are, up to conjugation  by $L$, the even and odd continued fractions in~\cite{bocalinden17}.
\end{example}

If $T$ is a slow algorithm and $T_E$ one of its accelerations, then we say that $T_E$ is a
\newword{c.~f.~algorithm}, or \newword{Gauss-type map}.
Writing $E=E(i,j)$ as in Definition~\ref{ref25}, it is easy to see that the inverse branches of $T_E$ are the matrices of the form $A_aB$ with $i\le a\le j$ and $B$ in the monoid generated by $\set{A_b:b<i\text{ or }j<b}$.
This set of inverse branches is countable (unless $E=\Delta$), and it is clear that the group it generates equals the subgroup $\Sigma_T$ of $\Pi$ generated by $\vect A0{{n-1}}$.
If $\alpha,\beta\in E$ are such that $T_E^{t_1}(\alpha)=T_E^{t_2}(\beta)$ for some
$t_1,t_2\ge0$, then obviously $\alpha,\beta$ are $\Sigma_T$-equivalent; this paper deals with the reverse implication.

\begin{definition}
We\label{ref19} say that \newword{$T_E$ has the tail property}, or that \newword{the Serret theorem holds for $T_E$}, if for every $\alpha,\beta\in E\setminus\Qbb$ that are $\Sigma_T$-equivalent, and whose forward $T_E$-orbit is never undefined, there exist $t_1,t_2\ge0$ such that $T_E^{t_1}(\alpha)=T_E^{t_2}(\beta)$.
\end{definition}

We discuss the above definition in the following remarks.

\begin{remark}
For every slow algorithm $T$ and every rational $\alpha=p/q\in\Delta$, the $T$-orbit of $\alpha$ ends up either in the fixed point $0$, or in the fixed point $\infty$, or in the 2-cycle $\set{0,\infty}$. This is readily proved by observing that whenever $p/q$ is in the topological interior of one of the intervals $\Delta_a$, then $T(\alpha)=p'/q'$ satisfies $0<p'+q'<p+q$. As a consequence, the $T_E$-orbit of any rational number either ends up in $\set{0,\infty}$, or is eventually undefined (this surely happens if $0,\infty\notin E$).

On the other hand, it is easy to construct accelerated maps that are undefined in points not $\Pi$-equivalent. For example, define $T$ by the inverse branches $\set{L,NL,N^2F}$; it is increasing on $\Delta_0=[0,1]$, increasing on $\Delta_1=[1,2]$ with fixed point $(1+\sqrt{5})/2$, and decreasing on $\Delta_2=[2,\infty]$ with fixed point $1+\sqrt{2}$. If we induce on $E=[0,1]$, then the points $\alpha=L*\bigl((1+\sqrt{5})/2\bigr),\, \beta=L*(1+\sqrt{2})\in E$
lie in different quadratic fields, and hence are not $\Pi$-equivalent.
However, $T_E$ is undefined in both $\alpha$ and $\beta$, since their $T$-images are $T$-fixed points external to~$E$.
It is therefore safe to discard rational and eventually undefined points from consideration.
\end{remark}

\begin{remark}
Let $T_{[0,1)}$ denote the classical Gauss map; its inverse branches 
generate~$\Pi$. As noted in~\S\ref{ref1}, the irrational numbers $\alpha,\beta$ have the same eventual $T_{[0,1)}$-orbit (namely, they satisfy the condition in Definition~\ref{ref19}) iff their regular c.~f.~expansions have the same tail. Thus the formulation in Definition~\ref{ref19} is equivalent to the classical one.
\end{remark}

\begin{remark}
If\label{ref8} $T$ has the tail property, then so does any of its accelerations.
Conversely, if $T_E$ has the tail property and $\alpha,\beta\in\Delta\setminus\Qbb$ are $\Sigma_T$-equivalent and such that their $T$-orbits enter $E$ infinitely often, then $T^{t_1}(\alpha)=T^{t_1}(\beta)$ for some $t_1,t_2$.
\end{remark}

\begin{example}
Here\label{ref26} is a simple example of an algorithm for which the tail property fails; we'll construct a more elaborate one in Example~\ref{ref21}. 
\begin{figure}[h]
\includegraphics[width=5cm]{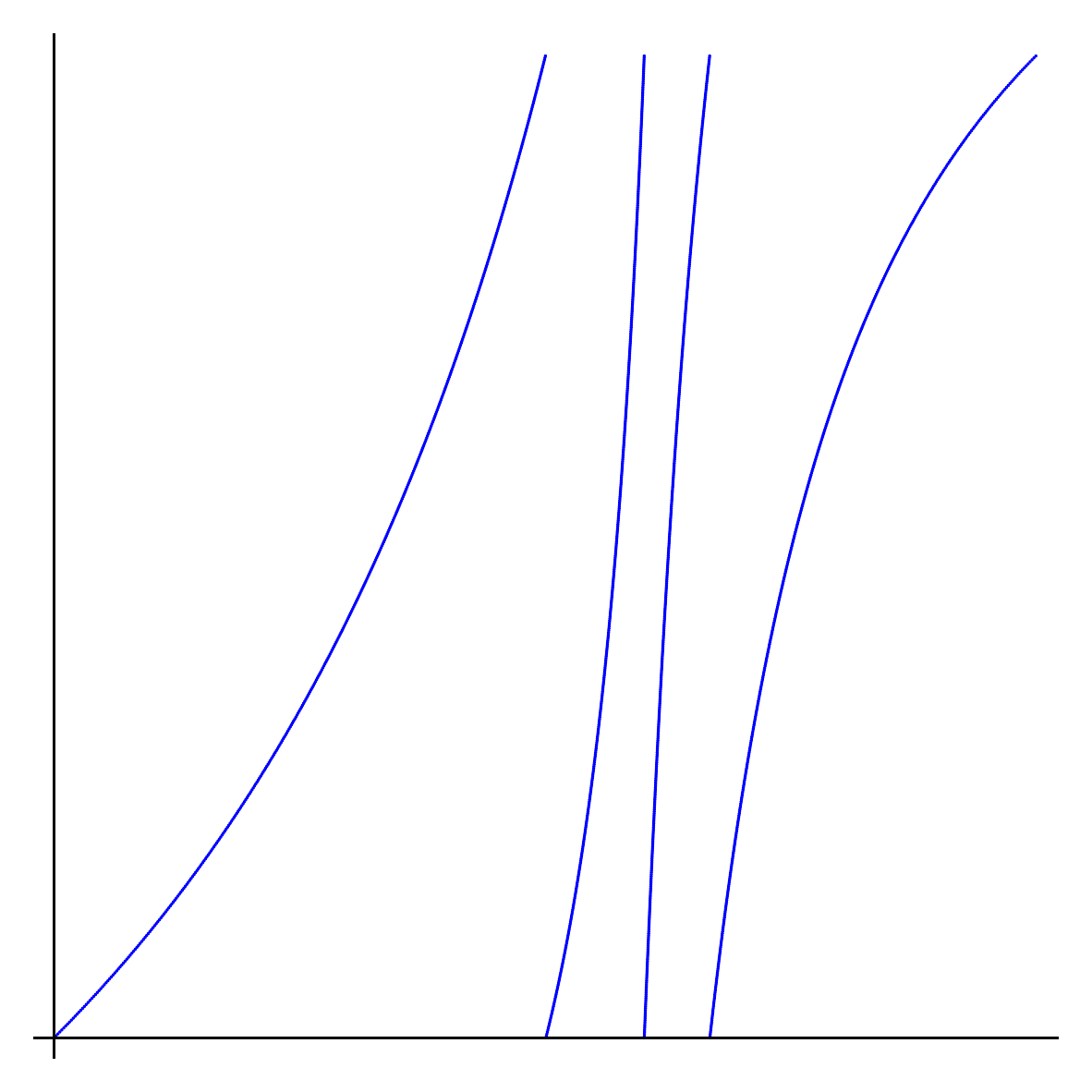}\qquad
\includegraphics[width=5cm]{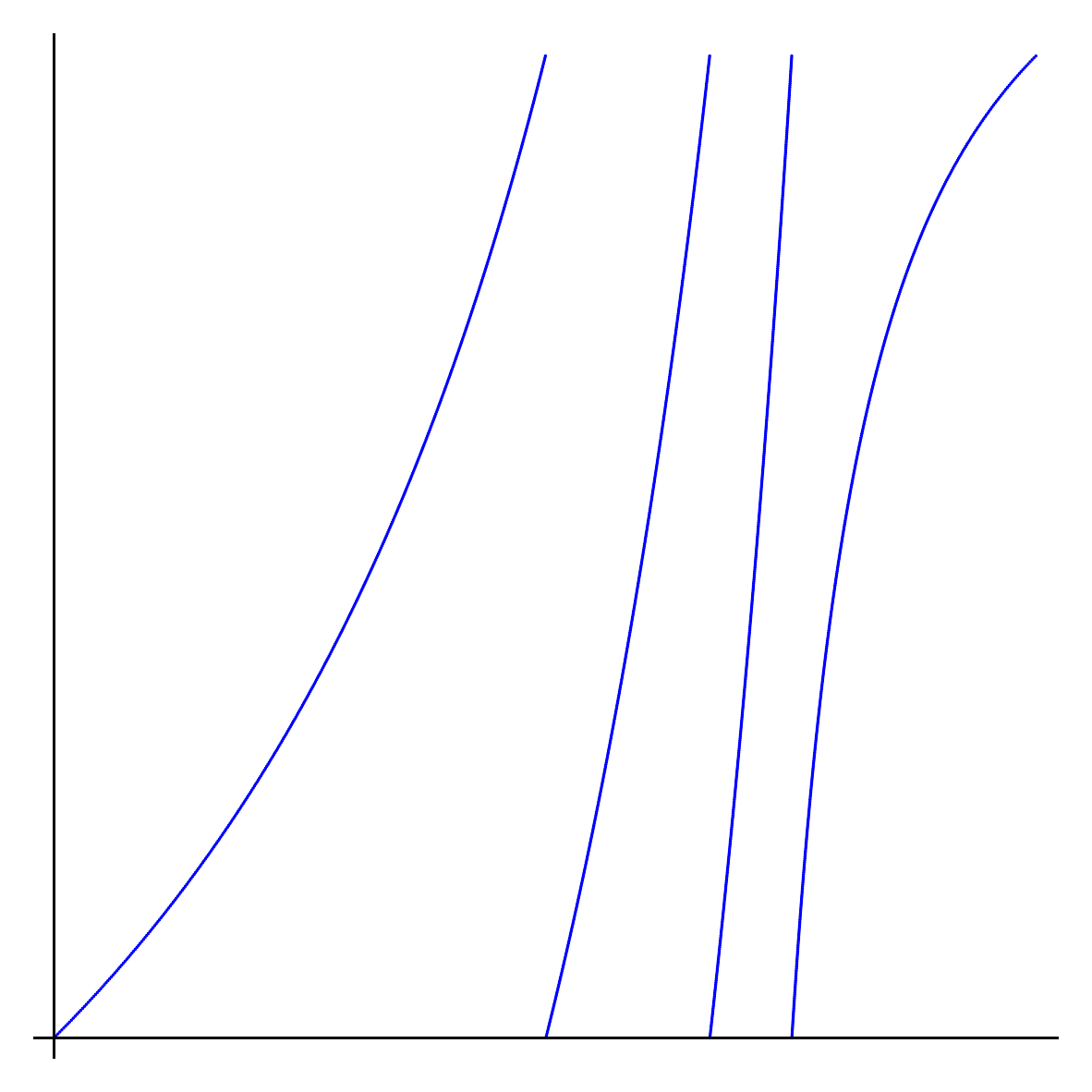}
\caption{}
\label{fig13}
\end{figure}
Let $T$ have inverse branches $\set{L,NLL,NLN,NN}$, and let $T'$ have $\set{L,NL,NNL,NNN}$. Their graphs, shown in Figure~\ref{fig13}, are similar, and obviously $\Sigma_T=\Sigma_{T'}=\Gamma$ (see also Corollary~\ref{ref28}).
We will show in Corollary~\ref{ref27} that $T'$ has the tail property. On the other hand, consider the third inverse branch $A_2=NLN$ of $T$, and let $\alpha=A_2*\alpha=\sqrt{3}$ be the fixed point of $T$ in $\Delta_2$. Of course $\alpha$ has $T$-symbolic orbit $\overline{2}$. However $NA_2N\m=A_3A_0$, and thus $\beta=N*\alpha=\sqrt{3}+1$ has $T$-symbolic orbit $\overline{30}$; hence the tail property fails.
\end{example}

\section{The graph of an algorithm}\label{ref4}

As noted in Remark~\ref{ref8}, the validity of the Serret theorem for a slow algorithm is equivalent to its validity for any acceleration; accordingly, for the rest of this paper we will only consider slow algorithms.
In this section we show that there are finitely many possibilities for the groups $\Sigma_T$; as a matter of fact, all such groups have index at most $8$ in $\Pi$.

We recall that the \newword{Schreier graph} $\Scal=\Scal(\Sigma,\Pi,\set{S,R,F})$ of $\Sigma\le\Pi$ w.r.t.~the generating set $\set{S,R,F}$ is defined as follows:
\begin{itemize}
\item the vertices of $\Scal$ are the right cosets $\Sigma B\in\Sigma\backslash\Pi$, and there is a distinguished vertex $1$ (called the \newword{root}), corresponding to $\Sigma$;
\item each edge is directed and labelled by one of $S,R,F$;
\item there is an $S$-edge from $\Sigma B$ to $\Sigma C$ iff $\Sigma C=\Sigma BS$, and analogously for $R$- and $F$-edges.
\end{itemize}
We'll also need the Schreier graph $\Scal(\Sigma,\Pi,\set{L,N,F})$
taken w.r.t.~the generating set $\set{L,N,F}$. We 
drop reference to $\Pi$ and to the generating set whenever possible; if the generating set is relevant we'll write
$\SRScal(\Sigma)$ and $\LNScal(\Sigma)$.

Schreier graphs are instances of rooted directed connected edge-labelled graphs. A homomorphism $\varphi:\Gcal'\to\Gcal$ of such graphs is a function that maps vertices to vertices, edges to edges, and preserves all the structure (root, labelling, edge source and target).
The homomorphism $\varphi$ is a \newword{covering} if it is surjective on vertices and edges, and \newword{locally trivial}: for each vertex $y$ of $\Gcal'$, $\varphi$ gives a bijection from the set of edges leaving and entering $y$ to
the set of edges leaving and entering $\varphi(y)$. If in the above definition we drop the words ``and entering'' we get the definition of \newword{opfibration} of graphs (the name originates from category theory~\cite[Definition~2.2]{boldivigna02}, an alternative name being \newword{right-covering}~\cite[Definition~8.2.1]{LindMar95}).

If $\Sigma'\le\Sigma$ then $\Sigma'B\mapsto\Sigma B$ gives a covering from $\Scal(\Sigma')$ to $\Scal(\Sigma)$; in particular, the \newword{Cayley graph} $\Scal(1)$ covers any Schreier graph for $\Pi$. The simple form of the relations involving $S,R,F$ makes the drawing of
$\SRScal(1)$ easy: we represent the ``upper part'' of it ---namely, the Cayley graph of $\Gamma$--- in Figure~\ref{fig5}.
The pattern in Figure~\ref{fig5} extends to infinity in all directions,
and $R$-edges are represented as plain arrows. As $S^2=1$,
$S$-edges appear in pairs going in opposite directions; we represent such a pair by a single unoriented dashed edge. We'll discuss later the vertices marked by a black circle. We obtain the full Cayley graph of $\Pi$ by attaching a twin copy of Figure~\ref{fig5} ``under the page''. Each upper vertex is connected to its lower twin by a pair of $F$-edges going in opposite directions, and again represented by a single dotted unoriented edge; $S$-edges are faithfully copied from the upper level to the lower one. $R$-edges are copied as well but, due to the relation $FRF=R\m$, the clockwise $3$-cycles become counterclockwise ones.
Drawing Schreier graphs w.r.t.~the generating set $\set{L,N,F}$ is messier, because the arrows tend to intersect; in Example~\ref{ref11} and later on
we'll use dashed arrows for $L$-edges and plain arrows for $N$-edges, while $F$-edges remain dotted and unoriented. One easily commutes between the two generating sets via the relations $L=R^2S$, $N=RS$, $S=NL\m N$, $R=LN\m$.
\begin{figure}[h!]
\begin{tikzpicture}[x={(1cm,0cm)},y={(-0.5cm,0.866cm)},
scale=0.8]
\node at (0,0) []  {$1$};
\node at (2,1) [vertex1]  {};
\node at (3,2) [vertex2]  {};
\node at (5,3) [vertex1]  {};
\node at (6,3) [vertex2]  {};
\node at (4,4) [vertex1]  {};
\node at (4,5) [vertex2]  {};
\node at (1,-1) [vertex1]  {};
\node at (1,-2) [vertex2]  {};
\node at (2,-3) [vertex1]  {};
\node at (3,-3) [vertex2]  {};
\node at (0,-4) [vertex1]  {};
\node at (-1,-5) [vertex2]  {};
\node at (-1,0) [vertex1]  {};
\node at (-2,1) [vertex1]  {};
\node at (-3,-1) [vertex1]  {};
\node at (-4,-2) [vertex1]  {};
\node at (-5,-4) [vertex1]  {};
\node at (-5,-5) [vertex1]  {};
\node at (-6,-3) [vertex1]  {};
\node at (-7,-3) [vertex1]  {};
\node at (-2,2) [vertex1]  {};
\node at (-1,4) [vertex1]  {};
\node at (0,5) [vertex1]  {};
\node at (-3,3) [vertex1]  {};
\node at (-4,3) [vertex1]  {};
\path (0,0) edge[Redge] (2,1)
            edge[Sedge] (-1,0);
\path (2,1) edge[Redge] (1,-1)
            edge[Sedge] (3,2);
\path (1,-1) edge[Redge] (0,0)
            edge[Sedge] (1,-2);
\path (4,4) edge[Redge] (5,3)
            edge[Sedge] (4,5);
\path (5,3) edge[Redge] (3,2)
            edge[Sedge] (6,3);
\path (-3,-1) edge[Redge] (-2,1)
            edge[Sedge] (-4,-2);
\path (-2,1) edge[Redge] (-1,0)
            edge[Sedge] (-2,2);
\path (2,-3) edge[Redge] (0,-4)
            edge[Sedge] (3,-3);
\path (0,-4) edge[Redge] (1,-2)
            edge[Sedge] (-1,-5);
\path (-1,4) edge[Redge] (-2,2)
            edge[Sedge] (0,5);
\path (-3,3) edge[Redge] (-1,4)
            edge[Sedge] (-4,3);
\path (-5,-4) edge[Redge] (-6,-3)
            edge[Sedge] (-5,-5);
\path (-6,-3) edge[Redge] (-4,-2)
            edge[Sedge] (-7,-3);
\path (3,2) edge[Redge] (4,4);
\path (1,-2) edge[Redge] (2,-3);
\path (-4,-2) edge[Redge] (-5,-4);
\path (-2,2) edge[Redge] (-3,3);
\path (-1,0) edge[Redge] (-3,-1);
\end{tikzpicture}
\caption{}
\label{fig5}
\end{figure}

The free monoid $\Mcal$ mentioned in~\S\ref{ref1}(2) determines a proper full subgraph of the $LN$-Cayley graph of $\Gamma$, and this subgraph is an infinite binary tree; the root $1$ and the black circles in Figure~\ref{fig5} are precisely the vertices of this tree. The $F$-twins ``under the page'' of these vertices correspond to the matrices in $\Pi$ with nonnegative entries and determinant~$-1$. They form a twin binary tree, whose $L,N$ labelling is flipped from the upper one, due to the relation $FLF=N$.
The complex of these two trees, together with the vertical $F$-edges connecting each vertex to its twin, constitutes a subgraph $\TTcal$ of the $LN$-Cayley graph of $\Pi$.

Before returning to our slow algorithms and stating Definition~\ref{ref17} we need an observation about unimodular partitions. Again by~\S\ref{ref1}(2), each unimodular partition of $\Delta=[0,\infty]$ can be obtained (not uniquely) from the trivial partition $\set{\Delta}$ in finitely many steps, each step consisting in choosing an interval $\Delta'=B*\Delta$ of the current partition ($B$ being some element of $\Mcal$), and replacing it with its two \newword{Farey splittings} $\Delta''=BL*\Delta$ and $\Delta'''=BN*\Delta$. Let now $T$ be a slow algorithm; recall that we specify it by providing a finite set $\set{\vect A0{{n-1}}}$ of matrices in $\Pi$ with  nonnegative entries,
which we number according to the left-to-right enumeration of the intervals $\Delta_a=A_a*[0,\infty]$.
Such a set is uniquely determined by $T$, and every $A_a$ is uniquely factorizable as $A_a=B_aF^{e(a)}$, with $B_a\in\Mcal$ and $e(a)\in\set{0,1}$. If we describe the unimodular partition associated to $T$ in terms of successive Farey splittings as above, then the set of $B$'s encountered during the process is precisely the set of left factors in $\Mcal$ of $\vect B0{{n-1}}$. In particular, given any $B$ in this set of left factors, precisely one of the following holds:
\begin{itemize}
\item[(a)] neither $BL$ nor $BN$ are in the set (this holds iff $B$ is one of $\vect B0{{n-1}}$);
\item[(b)] both $BL$ and $BN$ are in the set.
\end{itemize}

\begin{definition}
Let\label{ref17} $T$, $A_a=B_aF^{e(a)}$ be as above. We define the \newword{graph of $T$}, denoted by $\Gcal_T$, as follows. We start from the double binary tree $\TTcal$ described in the penultimate paragraph.
We delete from $\TTcal$ all vertices (and all edges incident to them) except those of the form $B$ and $BF$, where $B$ is a left factor in $\Mcal$ of one of $\vect B0{{n-1}}$. This leaves us with two copies of a finite binary tree, connected by vertical $F$-edges. 
By the previous observation each vertex either has an $L$-child and an $N$-child, which are distinct, or is a leaf and has no children; moreover, the set of leaves is precisely $\set{B_a:a\in\set{0,\ldots,n-1}}\cup
\set{B_aF:a\in\set{0,\ldots,n-1}}$.
Now, for each $a$, if $e(a)=0$ then we glue $B_a$ with the root $1$, and
$B_aF$ with the vertex $F$-connected to $1$, which we'll always denote $2$. Conversely, if $e(a)=1$ we glue $B_a$ with
$2$ and $B_aF$ with the root. This leaves us with a connected rooted graph such that precisely one $L$-, $N$-, and $F$-edge stems from every vertex, while the number of $L$- and $N$-edges entering in a given vertex may be $0$ or greater then~$1$.
\end{definition}

\begin{theorem}
There\label{ref9} exists an opfibration from $\Gcal_T$ to the Schreier graph $\LNScal(\Sigma_T)$; in particular, since $\Gcal_T$ is finite, $\Sigma_T$ has finite index in $\Pi$.
\end{theorem}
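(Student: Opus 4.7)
The plan is to define the natural map $\varphi:\Gcal_T\to\LNScal(\Sigma_T)$ that sends a vertex of $\Gcal_T$, represented by $B\in\Mcal$ (respectively $BF$), to the right coset $\Sigma_T B$ (respectively $\Sigma_T BF$), and to send each $L$-, $N$-, or $F$-edge coming from a pair $(B,BL)$, $(B,BN)$, $(B,BF)$ in the double tree $\TTcal$ to the corresponding labelled edge in the Schreier graph. The root $1\in\Gcal_T$ is sent to the root coset $\Sigma_T$, and the vertex $2=F$ to $\Sigma_T F$.

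The first thing to verify is that $\varphi$ is well-defined on vertices, since the construction of $\Gcal_T$ in Definition~\ref{ref17} identifies each leaf $B_a$ with either $1$ or $2$, and each leaf $B_aF$ with the opposite. If $e(a)=0$ then $A_a=B_a\in\Sigma_T$, so $\Sigma_T B_a=\Sigma_T=\varphi(1)$ and $\Sigma_T B_aF=\Sigma_T F=\varphi(2)$, matching the gluing; if $e(a)=1$ then $A_a=B_aF\in\Sigma_T$, giving $\Sigma_T B_a F=\Sigma_T$ and $\Sigma_T B_a=\Sigma_T F$, again matching. Edge labels and endpoints are preserved by construction, so $\varphi$ is a homomorphism of rooted labelled directed graphs.

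For the opfibration condition, observe that by the last sentence of Definition~\ref{ref17} every vertex of $\Gcal_T$ has exactly one outgoing $L$-edge, one outgoing $N$-edge, and one outgoing $F$-edge, while every vertex of $\LNScal(\Sigma_T)$ has exactly one outgoing edge of each label by the very definition of Schreier graph. Since $\varphi$ preserves labels, the induced map from outgoing edges of $v$ to outgoing edges of $\varphi(v)$ is a bijection on a three-element set, establishing local triviality. Surjectivity on vertices is then automatic: since $\Pi=\angles{L,N,F}$ (using $L=R^2S$, $N=RS$), every coset $\Sigma_T B$ is reached from $\Sigma_T$ by a word in $L,N,F$, and by iteratively lifting outgoing edges via the bijection above, starting from $1\in\Gcal_T$, we can realize every such coset as an image. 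Surjectivity on edges follows immediately.

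Finite index is then a trivial corollary: $\Gcal_T$ is finite by construction (its underlying tree was obtained from a finite set of leaves $\set{B_a}$), hence its surjective image $\LNScal(\Sigma_T)$ has finitely many vertices, i.e., $[\Pi:\Sigma_T]<\infty$. The only subtle point is verifying the compatibility between the leaf identifications in $\Gcal_T$ and the coset structure, which I expect to be the main thing to get right — but it reduces exactly to the identity $A_a=B_aF^{e(a)}\in\Sigma_T$, and everything else is essentially bookkeeping.
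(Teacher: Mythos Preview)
Your argument is essentially correct and follows a different (and arguably cleaner) route than the paper's. The paper never writes down the map $\varphi$ explicitly; instead it runs an iterative \emph{folding} procedure on $\Gcal_T$: whenever a vertex receives two incoming edges with the same label, the two sources are glued and one passes to the quotient. The process terminates in a graph in which in-degree and out-degree match at every vertex, which is then recognized as the Schreier graph of~$\Sigma_T$ via the observation that root-based closed circuits always lie in~$\Sigma_T$. The opfibration is the composite of the quotient maps. Your approach defines $\varphi$ in one stroke as $v\mapsto\Sigma_T v$; the paper's buys an algorithm for actually computing $\LNScal(\Sigma_T)$ from $\Gcal_T$ (as is done in the examples), while yours makes the verification of the opfibration axioms immediate.

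There is one small gap. In your surjectivity argument you write that every coset is reached from $\Sigma_T$ ``by a word in $L,N,F$'' and then lift that word through outgoing edges. But $L,N,F$ do \emph{not} generate $\Pi$ as a monoid (every positive product has nonnegative entries, so e.g.~$S$ is unreachable), and an opfibration only lets you lift \emph{outgoing} edges, not incoming ones. The fix is short and uses the finiteness you already have: the image of $\varphi$ on vertices is a finite subset of $\Sigma_T\backslash\Pi$ that is closed under right multiplication by each of $L,N,F$; since right multiplication by any group element is injective on cosets, and an injective self-map of a finite set is bijective, the image is also closed under right multiplication by $L^{-1},N^{-1},F^{-1}$, hence under all of $\Pi$. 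As it contains $\Sigma_T$, it is everything. This simultaneously gives surjectivity and the finite-index conclusion.
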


\begin{example}
Let\label{ref11} $T$ be determined by $A_0=L^3$, $A_1=L^2NF$, $A_2=LNL$, $A_3=LN^2F$, $A_4=N$. Labelling the vertices for clarity, and omitting the vertical $F$-edges, we obtain the graph~$\Gcal_T$ shown in Figure~\ref{fig7}.
\begin{figure}[h!]
\begin{tikzpicture}[scale=0.7]
\node (1) at (0,10)  []  {$1$};
\node (LN) at (5,9) []  {$LN$};
\node (L)  at (2,8) []  {$L$};
\node (LL)  at (4,6) []  {$LL$};
\node (F)  at (0,4) []  {$2$};
\node (LNF)  at (5,3) []  {$LNF$};
\node (LF)  at (2,2) []  {$LF$};
\node (LLF)  at (4,0) []  {$LLF$};
\path
(1) edge[Ledge] (L)
(1) edge[Nedge,out=110,in=170,loop] (1)
(LN) edge[Ledge] (1)
(LN) edge[Nedge,out=-80,in=5,looseness=1.3] (F)
(L) edge[Ledge] (LL)
(L) edge[Nedge] (LN)
(LL) edge[Ledge,bend left] (1)
(LL) edge[Nedge] (F)
(F) edge[Ledge,out=110,in=170,loop] (F)
(F) edge[Nedge] (LF)
(LF) edge[Ledge] (LNF)
(LF) edge[Nedge] (LLF)
(LNF) edge[Ledge,out=45,in=20,looseness=1.7] (1)
(LNF) edge[Nedge] (F)
(LLF) edge[Ledge,out=180,in=-135,looseness=1.3] (1)
(LLF) edge[Nedge,bend left] (F);
\end{tikzpicture}
\caption{}
\label{fig7}
\end{figure}
\end{example}

\begin{example}
The\label{ref10} minimum number of inverse branches for $T$ is $2$, and in this case there are $4$ possibilities for $\Gcal_T$, shown in Figure~\ref{fig6} (if two distinct vertices are connected by a pair of arrows with the same labelling and going in opposite directions, we draw a single shaft without arrowheads).
\begin{figure}[h!]
\begin{tikzpicture}[scale=0.8]
\node at (0,3) []  {$1$};
\node at (0,1) []  {$2$};
\node at (0,0) []  {$\set{L,N}$};
\path
(0,3) edge[Fedge] (0,1)
(0,3) edge[Nedge,out=30,in=-30,loop] (0,3)
(0,3) edge[Ledge,out=150,in=-150,loop] (0,3)
(0,1) edge[Nedge,out=30,in=-30,loop] (0,3)
(0,1) edge[Ledge,out=150,in=-150,loop] (0,1);
\node at (4,3) []  {$1$};
\node at (4,1) []  {$2$};
\node at (4,0) []  {$\{L,NF\}$};
\path
(4,3) edge[Fedge] (4,1)
(4,3) edge[Ledge,out=150,in=-150,loop] (4,3)
(4,3) edge[Nedge,bend left] (4,1)
(4,1) edge[Ledge,bend left] (4,3)
(4,1) edge[Nedge,out=30,in=-30,loop] (4,1);
\node at (8,3) []  {$1$};
\node at (8,1) []  {$2$};
\node at (8,0) []  {$\{LF,NF\}$};
\path
(8,3) edge[Fedge] (8,1)
(8,3) edge[NNedge,bend left] (8,1)
(8,3) edge[LLedge,bend right] (8,1);
\node at (12,3) []  {$1$};
\node at (12,1) []  {$2$};
\node at (12,0) []  {$\{LF,N\}$};
\path
(12,3) edge[Fedge] (12,1)
(12,3) edge[Ledge,bend right] (12,1)
(12,3) edge[Nedge,out=30,in=-30,loop] (12,3)
(12,1) edge[Nedge,bend right] (12,3)
(12,1) edge[Ledge,out=150,in=-150,loop] (12,1);
\end{tikzpicture}
\caption{}
\label{fig6}
\end{figure}
In the first and third case we directly obtain the Schreier graph for $\Sigma_T$, that turns out to be $\Gamma$ (this is Example~\ref{ref16}) and $\alpha\Gamma$, respectively. In both cases flipping the two vertices is a graph automorphism, corresponding to the fact that $T$ is invariant under conjugation by~$F$. 
In the second and fourth case we have a proper opfibration of the trivial Schreier graph, so $\Sigma_T=\Pi$. Flipping the vertices exchanges the second graph with the fourth and indeed, as remarked in Example~\ref{ref6}, the map in Example~\ref{ref7} is $F$-conjugated to the Farey map.
\end{example}

\begin{proof}[Proof of Theorem~\ref{ref9}]
By construction, all closed circuits in $\Gcal_T$ starting and ending at the root determine a product of $L^{\pm1},N^{\pm1},F$ that belongs to $\Sigma_T$. If no vertex is the target of two distinct edges with the same labelling, then the fact that every vertex is the source of precisely one edge for each edge label implies that it is also the target of precisely one edge for each label. Therefore
$\Gcal_T$ is already a Schreier graph, necessarily ---by the previous remark--- of $\Sigma_T$. If the vertex $v$ is the target of two distinct edges with the same labelling, one originating from $u$ and the other from $w$, then we glue $u$ and $w$ and take the quotient graph. The new root-based closed circuits originating from this process are still in $\Sigma_T$, because the gluing is just a consequence of the cancellation property in groups. We repeat the process, which must eventually terminate, leaving us with the $LN$-Schreier graph of $\Sigma_T$.
\end{proof}

Continuing with Example~\ref{ref11}, the vertices $LN,LL,LNF,LLF$ of Figure~\ref{fig7} must all be glued together, because any of them is the source of an $L$-edge to the root (as well as of an $N$-edge to $F$). The resulting vertex is now the target of an $N$-edge from~$L$, and another one from $LF$. This forces the gluing of $L$ and $LF$, and we are left with the Schreier graph in Figure~\ref{fig8}. Since the root is the unique vertex carrying an $N$-loop, $\Scal(\Sigma_T)$ has a trivial automorphism group; hence $\Sigma_T$ is an
index-$4$ subgroup of $\Pi$ that equals its own normalizer.
\begin{figure}[h!]
\begin{tikzpicture}[scale=0.8]
\node (rt) at (0,2)  []  {$1$};
\node (2) at (0,0) []  {$2$};
\node (3)  at (2,1) [vertex1]  {};
\node (4)  at (4,1) [vertex1]  {};
\path
(rt) edge[Nedge,out=150,in=-150,loop] (rt)
(rt) edge[Ledge] (3)
(rt) edge[Fedge] (2)
(2) edge[Ledge,out=150,in=-150,loop] (2)
(2) edge[Nedge] (3)
(3) edge[Fedge,out=-120,in=-60,loop] (3)
(3) edge[Ledge,bend left] (4)
(3) edge[Nedge,bend right] (4)
(4) edge[Fedge,out=-120,in=-60,loop] (4)
(4) edge[Ledge,bend right] (rt)
(4) edge[Nedge,bend left] (2);
\end{tikzpicture}
\caption{}
\label{fig8}
\end{figure}

\section{Index at most $8$}\label{ref12}

Since $\Pi$ is finitely generated, the following theorem implies that the list of possible $\Sigma_T$'s is finite.

\begin{theorem}
Let\label{ref13} $T$ be a slow continued fraction algorithm. Then the group $\Sigma_T$ generated by the inverse branches of $T$ has index at most $8$ in $\Pi$. All indices from~$1$ to $8$ are realized by some $\Sigma_T$, possibly in non-isomorphic ways.
\end{theorem}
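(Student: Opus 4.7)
By Theorem~\ref{ref9}, the opfibration $\Gcal_T\to\LNScal(\Sigma_T)$ is surjective on vertices, so $[\Pi:\Sigma_T]=|V(\LNScal(\Sigma_T))|$, the vertex count of a quotient of the finite graph $\Gcal_T$. A direct inspection of Definition~\ref{ref17} shows that $|V(\Gcal_T)|=2(n-1)$, which grows with the number $n$ of inverse branches, so the uniform bound $8$ cannot follow from the size of $\Gcal_T$ alone; it must come from identifications forced by the presentation of $\Pi$.

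The plan is to reduce to a bound for $[\Gamma:\Sigma_T\cap\Gamma]$. From the splitting $\Pi=\Gamma\sqcup\Gamma F$ one checks: if $\Sigma_T\le\Gamma$ then $[\Pi:\Sigma_T]=2\,[\Gamma:\Sigma_T]$, while if $\Sigma_T\not\le\Gamma$ then $\Sigma_T\cap\Gamma$ has index~$2$ in $\Sigma_T$ and $[\Pi:\Sigma_T]=[\Gamma:\Sigma_T\cap\Gamma]$. In either case the theorem reduces to showing
\[
[\Gamma:\Sigma_T\cap\Gamma]\le 4.
\]
Using the relations $FLF=N$ and $FNF=L$, a generating set for $\Sigma_T\cap\Gamma$ is read off from the inverse branches $A_a=B_aF^{e(a)}$: each $A_a$ with $e(a)=0$ contributes $B_a\in\Mcal$ directly, while each product $A_aA_b$ with $e(a)=e(b)=1$ equals $B_a(FB_bF)$, and $FB_bF$ is the element of $\Mcal$ obtained from $B_b$ by interchanging the generators $L$ and~$N$. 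Thus $\Sigma_T\cap\Gamma$ is generated by specific positive matrices attached to the leaves of the Farey binary tree of $T$, possibly together with their $L\leftrightarrow N$ mirrors.

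The core combinatorial step, which I expect to be the main obstacle, is to show that any subgroup of $\Gamma$ arising in this way has index at most~$4$. I would approach this by analysing the $\{L,N\}$-Schreier graph of $\Sigma_T\cap\Gamma$ inside $\Gamma$ (equivalently, the ``upper half'' of the quotient produced by Theorem~\ref{ref9}), running a case analysis driven by the signs $e(0),e(n-1)$ of the extremal branches (which by unimodularity of the partition must take the shape $L^{c_0}F^{e(0)}$ and $N^{c_{n-1}}F^{e(n-1)}$) and by the parity pattern of the remaining $e(a)$'s. The structural constraint that every leaf of the Farey tree is glued either to the root $1$ or to its $F$-twin~$2$, combined with the relations $S^2=R^3=1$ in $\Gamma$, should force at most four $\Gamma$-cosets of $\Sigma_T\cap\Gamma$ to survive the iterative gluing performed in the proof of Theorem~\ref{ref9}.

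For the realizability statement, I would exhibit explicit algorithms for each target index: indices $1$ and~$2$ are already achieved by Example~\ref{ref10}, index~$4$ by Example~\ref{ref11}, and the remaining values $3,5,6,7,8$ can be targeted by small, systematic variations of the Farey partition and of the signs $e(a)$. For each candidate one draws $\Gcal_T$, performs the iterative gluing of the proof of Theorem~\ref{ref9}, and reads off the index as the number of vertices in the resulting Schreier graph. To substantiate the ``possibly in non-isomorphic ways'' clause, I would also give, for at least one common index, two algorithms whose subgroups $\Sigma_T$ lie in distinct $\Pi$-conjugacy classes.
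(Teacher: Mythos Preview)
Your reduction to the bound $[\Gamma:\Sigma_T\cap\Gamma]\le 4$ is incorrect, and in fact that inequality is false. You correctly derive that when $\Sigma_T\not\le\Gamma$ one has $[\Pi:\Sigma_T]=[\Gamma:\Sigma_T\cap\Gamma]$, but then you conclude that proving $[\Gamma:\Sigma_T\cap\Gamma]\le 4$ suffices. This would force $[\Pi:\Sigma_T]\le 4$ whenever $\Sigma_T\not\le\Gamma$, which is not the case: the paper's Subcase~3.3 produces index-$8$ examples in which $RF\in\Sigma_T$ (because the vertex $2=1F$ coincides with $1R$), so $\Sigma_T\not\le\Gamma$ and hence $[\Gamma:\Sigma_T\cap\Gamma]=8$. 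The correct reduction only gives you $[\Gamma:\Sigma_T\cap\Gamma]\le 4$ in the case $\Sigma_T\le\Gamma$ (and Corollary~\ref{ref28} shows that case is in fact much more rigid), while in the case $\Sigma_T\not\le\Gamma$ you still need the full bound $[\Gamma:\Sigma_T\cap\Gamma]\le 8$, i.e., essentially the original statement.

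Beyond this, your sketch does not isolate the two structural ingredients that actually drive the bound in the paper. First, from any Farey splitting $\Delta_a\cup\Delta_{a+1}=B*\Delta$ one extracts that $\Sigma_T$ contains one of $SRS$, $SRSF$, $SR^2SF$; this is a concrete algebraic constraint, not merely ``the relations $S^2=R^3=1$''. Second, and crucially, the opfibration of Theorem~\ref{ref9} implies that $\SRScal(\Sigma_T)$ cannot contain an infinite positively oriented $LN$-path avoiding both $1$ and $2$, since any such path would lift to $\Gcal_T$. It is this no-infinite-path property, packaged as a lemma on ``almost complete'' subgraphs, that terminates the case analysis and yields the bound~$8$. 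Your plan of analysing the signs $e(0),e(n-1)$ of the extremal branches and the parity pattern of the remaining $e(a)$'s does not obviously capture either of these constraints, and without them there is no mechanism preventing the Schreier graph from being large.
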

\begin{proof}
The proof is based on two observations and a lemma.
\begin{itemize}
\item[(A)] Every $\Sigma_T$ must contain at least one of $SRS,SRSF,SR^2SF$. Indeed, the observation preceding Definition~\ref{ref17} implies that at least one pair of consecutive intervals
$\Delta_a,\Delta_{a+1}$ in the unimodular partition associated to $T$ arises from the Farey splitting of the interval $\Delta_a\cup\Delta_{a+1}=B*\Delta$, with $B\in\Mcal$ such that $A_a=BLF^{e(a)}$ and $A_{a+1}=BNF^{e(a+1)}$. Therefore $\Sigma_T\ni A_{a+1}\m A_a=F^{e(a+1)}N\m LF^{e(a)}=F^{e(a+1)}SRSF^{e(a)}$. If $e(a),e(a+1)$ have the same parity, then $SRS$ belongs to~$\Sigma_T$, while if they have different parity either $SRSF$ or its $F$-conjugate $SR^2SF$ belongs to $\Sigma_T$.
\item[(B)] The Schreier graph of $\Sigma_T$ cannot contain an infinite path that runs along the positively oriented $L$- and $N$-edges and avoids forever both $1$ and $2$ (which may coincide).
Indeed, if such a path existed then it would be liftable to $\Gcal_T$ by the opfibration property established in Theorem~\ref{ref9} (as a matter of fact, the liftability of positively oriented paths characterizes opfibrations~\cite[\S2.2]{boldivigna02}). But, by the very construction of $\Gcal_T$, such an infinite path cannot exist.
\end{itemize}

Conjugating by $F$ we assume without loss of generality that our fixed $\Sigma=\Sigma_T$ contains at least one of $SRS$, $SRSF$. We will establish Theorem~\ref{ref13} by showing that $\Scal(\Sigma)$ cannot have $9$ or more vertices. We work with $SR$-graphs, and we repeat our drawing conventions: $S$-, $R$-, and $F$-edges are, respectively, dashed, plain, and dotted. Each pair of $S$- or $F$-edges connecting two distinct vertices and going in opposite directions are drawn as a single edge with no arrowheads.

We remark that whenever $(u,v,w)$ is an $R$-cycle in $\Scal(\Sigma)$ ($R$-cycles are always assumed to be nontrivial, otherwise we speak of $R$-loops), then $u$ is $F$-connected to $v$ iff $w$ carries an $F$-loop. This fact follows from the identity $F=RFR$ and will be used several times.

\begin{lemma}
Suppose\label{ref14} we have constructed a subgraph $\Gcal$ of $\Scal(\Sigma)$.
Let $1$, $2$, $\ldots$, $n-1$, $n$ be the vertices of $\Gcal$ (still keeping our standard notation of $1,2$ for the root and the vertex $F$-connected to it). Assume that $n-1$ and $n$ are $S$-connected (possibly $n-1=n$, i.e., there is an $S$-loop at $n-1$), and that $\Gcal$ is \newword{almost complete}, meaning that the mere addition of an $R$-loop at $n$ gives a graph $\Gcal^*$ which is the Schreier graph of some subgroup of~$\Pi$. Then $\Scal(\Sigma)$ is indeed $\Gcal^*$.
\end{lemma}
\begin{proof}
Suppose not. Then $\Scal(\Sigma)$ must contain vertices $n+1,n+2,n+3,n+4$ (and possibly others), all of them not in $\Gcal$, with $(n,n+1,n+2)$ forming an $R$-cycle, $n+1$ $S$-connected to $n+3$, and $n+2$ $S$-connected to $n+4$ ($n+3$ and $n+4$ are not necessarily distinct from $n+1$ and $n+2$). But then we can find in $\Scal(\Sigma)$ an infinite positively oriented $LN$-path touching only vertices not in $\Gcal$. Indeed, we start from either $n+3$ or $n+4$ and move along $L=R^2S$ and $N=RS$ edges arbitrarily. Since~$\Gcal$ is almost complete, there is no risk of touching a vertex in $\Gcal$, unless we land at $n+1$ or at $n+2$. If we land at $n+1$ we move to $n+4$ via $N$, and if we land at $n+2$ we move to $n+3$ via $L$, and continue our errand. The existence of such a path contradicts~(B).
\end{proof}

We now argue by cases.\nopagebreak[4]
\paragraph{\emph{Case 1: $R\in\Sigma$}}
If $SRS\in\Sigma$ then surely the index of $\Sigma$ is 
$\le4$. Indeed every element of $\Pi$ factors (uniquely) as a word in $S$ and $R$, possibly followed by a single occurrence of $F$.
One easily deduces that each right coset must be of the form $\Sigma B$, with $B\in\set{1,S,F,SF}$.

If $SRS\notin\Sigma$ then $SRSF\in\Sigma$ and $F\notin\Sigma$.
This implies that the Schreier graph of~$\Sigma$ in a neighborhood of the root must be of the form given in Figure~\ref{fig9}.
\begin{figure}[h!]
\begin{tikzpicture}[scale=0.8]
\node (1) at (0,2) []  {1};
\node (2) at (0,0) []  {2};
\node (3) at (2,2) []  {3};
\node (4) at (2,0) []  {4};
\node (5) at (4,1) [] {5};
\path
(1) edge[Redge,out=160,in=-160,loop] (1)
(1) edge[Sedge] (3)
(1) edge[Fedge] (2)
(2) edge[Redge,out=160,in=-160,loop] (2)
(2) edge[Sedge] (4)
(3) edge[Fedge] (4)
(3) edge[Redge,bend left] (4)
(4) edge[Redge] (5)
(5) edge[Redge] (3)
(5) edge[Fedge,out=-110,in=-70,loop] (5);
\end{tikzpicture}
\caption{}
\label{fig9}
\end{figure}
Indeed, the $R$-loop in $2$ appears by $R=FR^2F$. The two distinct vertices $3,4$ appear since~$1$ cannot be $S$-connected with either $1$ or $2$ (otherwise $SRS$ would belong to $\Sigma$). The vertex $3$ must be $R$-connected to $4$ since $SRSF\in\Sigma$, and this creates an $R$-cycle involving a fresh vertex $5$. The vertices $3$ and $4$ must be $F$-connected because~$1$ and~$2$ so are, and $F=SFS$. Finally, the $F$-loop in $5$ appears by the remark preceding Lemma~\ref{ref14}.

\paragraph{\emph{Claim}} The only way of completing the graph in Figure~\ref{fig9} to $\Scal(\Sigma)$ is either by adding an $S$-loop to $5$, or by $S$-connecting $5$ to a new vertex $6$ and adding to $6$ an $R$- and an $F$-loop.
\paragraph{\emph{Proof of Claim}} Adding an $S$-loop at $5$ completes the graph and we are done. If we do not do so, we are forced to
$S$-connect $5$ to a new vertex $6$, that must carry an $F$-loop because so does~$5$. The resulting graph is almost complete, so our statement follows from Lemma~\ref{ref14}.

By the claim, $\Sigma$ must have index $5$ or $6$, and this concludes the analysis of Case~1.

We assume now $R\notin\Sigma$, so $1$ belongs to an $R$-cycle $(1,3,5)$
and we have $4$ more cases: $2=1$, $2=3$, $2=5$, and $2\notin\set{1,3,5}$.

\paragraph{\emph{Case 2: $2=1$, i.e., there is an $F$-loop at $1$}}
This simply cannot happen. Indeed, by (A) and $F\in\Sigma$ we have $SRS\in\Sigma$. Now, $1$ cannot be $S$-connected to any of $1$, $3$, $5$, as in that case starting from $1$ and following the path $SRS$ does not bring us back to $1$. Therefore $1$ must be $S$-connected to a new vertex $4$, carrying both an $F$- and an $R$-loop. The graph $\Gcal$ containing the two $S$-connected vertices $1,4$, an $F$-loop at both, and an $R$-loop at $4$, is almost complete. Therefore, by Lemma~\ref{ref14}, there could not be any $R$-cycle at~$1$.

\paragraph{\emph{Case 3: $2=3$}} 
Then there must be an $F$-loop at $5$. Also, $5$ cannot be $S$-connected to either $1$ or $2$, since otherwise $1$ and $2$ would collapse. The vertices $1$ and $2$ cannot be $S$-connected, otherwise~(A) is violated. Therefore, in a neighborhood of the root the Schreier graph of $\Sigma$ is of the form in Figure~\ref{fig10}; it might be $6=1$,  
which is equivalent to $4=2$.
\begin{figure}[h!]
\begin{tikzpicture}[scale=0.8]
\node (6) at (0,2) []  {$6$};
\node (4) at (0,0) []  {$4$};
\node (1) at (2,2) []  {$1$};
\node (2) at (2,0) []  {$2$};
\node (5) at (4,1) []  {$5$};
\path
(6) edge[Sedge] (1)
(6) edge[Fedge] (4)
(4) edge[Sedge] (2)
(2) edge[Fedge] (1)
(1) edge[Redge,bend left] (2)
(2) edge[Redge] (5)
(5) edge[Redge] (1)
(5) edge[Fedge,out=-110,in=-70,loop] (5);
\end{tikzpicture}
\caption{}\label{fig10}
\end{figure}

\paragraph{\emph{Subcase 3.1: $6=1$}}
Then there is an $S$-loop both at $1$ and at $2$. If there is an $S$-loop at $5$ then the graph is complete, while if $5$ is $S$-connected to a new vertex then the latter must carry an $F$-loop. This leaves us with an almost complete graph and Lemma~\ref{ref14} applies. Hence $\Sigma$ has index $3$ or $4$.

\paragraph{\emph{Subcase 3.2: $6\not=1$, and $6$ carries an $R$-loop}}
Due to $R=FR^2F$, $4$ carries an $R$-loop as well. By the same argument as in Subcase~3.1, $\Sigma$ has index $5$ or $6$.

\paragraph{\emph{Subcase 3.3: $6\not=1$, and $6$ carries an $R$-cycle}}
By~(A), $6$ must be $R$-connected to~$4$, so we are left with the situation in Figure~\ref{fig11}, where possibly $8=5$ and/or $9=7$.
\begin{figure}[h!]
\begin{tikzpicture}[scale=0.8]
\node (6) at (0,2) []  {$6$};
\node (4) at (0,0) []  {$4$};
\node (1) at (2,2) []  {$1$};
\node (2) at (2,0) []  {$2$};
\node (5) at (4,1) []  {$5$};
\node (8) at (6,1) []  {$8$};
\node (7) at (-2,1) [] {$7$};
\node (9) at (-4,1) [] {$9$};
\path
(6) edge[Sedge] (1)
(6) edge[Fedge] (4)
(4) edge[Sedge] (2)
(1) edge[Fedge] (2)
(1) edge[Redge,bend left] (2)
(2) edge[Redge] (5)
(5) edge[Redge] (1)
(5) edge[Fedge,out=-110,in=-70,loop] (5)
(5) edge[Sedge] (8)
(8) edge[Fedge,out=-110,in=-70,loop] (8)
(7) edge[Fedge,out=-110,in=-70,loop] (7)
(9) edge[Fedge,out=-110,in=-70,loop] (9)
(7) edge[Sedge] (9)
(6) edge[Redge,bend right] (4)
(4) edge[Redge] (7)
(7) edge[Redge] (6);
\end{tikzpicture}
\caption{}\label{fig11}
\end{figure}
\begin{itemize}
\item If $8=5$ (i.e., $5$ carries an $S$-loop), then Lemma~\ref{ref14} assures us that $\Sigma$ has index $6$ (if $9=7$) or $7$ (if $9\not=7$). Analogously if $9=7$.
\item If $8\not=5$, $9\not=7$, and either $8$ or $9$ carries an $R$-loop, then the other one must carry an $R$-loop as well, again 
by Lemma~\ref{ref14}, so $\Sigma$ has index $8$.
\end{itemize}
We claim that there are no other possibilities, i.e., that any Schreier graph~$\Scal$ extending Figure~\ref{fig11} (with $8\not=5$ and $9\not=7$)  and with strictly more vertices contradicts~(B).
Indeed, by the discussion above, $\Scal$ should contain two $R$-cycles $(8,10,11)$ and $(9,12,13)$. The presence of $F$-loops at $8$ and $9$ forces $\set{8,10,11}\cap\set{9,12,13}=\emptyset$, again by the remark preceding Lemma~\ref{ref14}.
It follows that the set $D=\set{\text{vertices of $\Scal$}}\setminus\set{1,2(=3),4,5,6,7,8,9}$ contains at least the four distinct vertices $10,11,12,13$, as well as the vertices $10',11',12',13'$ $S$-connected to them. But then we can construct an infinite $LN$-path all contained in $D$, just taking care that whenever we land in, say, $10$, we apply $N$ and go to $11'$ (while $L$ would lead us to $5\notin D$). This contradicts~(B), establishes our claim and concludes the analysis of Case~3.

\paragraph{\emph{Case 4: $2=5$}} 
This is analogous to Case~3, except for the possibility, at the beginning of the discussion, that $1$ and $2(=5)$ be $S$-connected. This is now possible (and yields $SRSF\in\Sigma$), while it was not in Case~2. However, if $1$ and $2$ are $S$-connected it is easy to see, by looking at the $S$-edge leaving $3$ and using Lemma~\ref{ref14}, that $\Sigma$ must then have index $3$ or $4$. The rest of the proof is completely analogous to that in Case~3.

\paragraph{\emph{Case 5: $2\notin\set{1,3,5}$}} 
Again, this cannot happen. Indeed, one easily sees that the hypotheses yield the existence of another $R$-cycle $\set{2,4,6}$ with $F$-edges connecting $3$ with $6$ and $5$ with $4$. As in Case~2, condition~(A)
precludes $1$ to be $S$-connected to any of $1,2,3,4,5,6$, and this
forces two new vertices $7\not=8$, which are connected by $S$- and $F$-edges as in Figure~\ref{fig12}. In order to satisfy~(A), precisely two cases are possible.
\begin{figure}[h!]
\begin{tikzpicture}[scale=0.6]
\node (8) at (0,0) []  {$8$};
\node (2) at (3,0) []  {$2$};
\node (4) at (6,-1) []  {$4$};
\node (6) at (5,1) []  {$6$};
\node (7) at (0,3) []  {$7$};
\node (1) at (3,3) []  {$1$};
\node (5) at (6,2) []  {$5$};
\node (3) at (5,4) []  {$3$};
\path
(7) edge[Fedge] (8)
(1) edge[Fedge] (2)
(5) edge[Fedge] (4)
(3) edge[Fedge] (6)
(1) edge[Sedge] (7)
(8) edge[Sedge] (2)
(1) edge[Redge] (3)
(3) edge[Redge] (5)
(5) edge[Redge] (1)
(2) edge[Redge] (4)
(4) edge[Redge] (6)
(6) edge[Redge] (2);
\end{tikzpicture}
\caption{}\label{fig12}
\end{figure}

\paragraph{\emph{Subcase 5.1: There is an $R$-loop at $7$}}
Since $R=FR^2F$, this forces an $R$-loop at $8$ as well. Then, as in Subcase~3.3, we can construct an infinite $LN$-path that avoids the set $\set{1,2,7,8}$ forever; this subcase is therefore impossible.

\paragraph{\emph{Subcase 5.2: There is an $R$-edge from $7$ to $8$}}
This creates an $R$-cycle $(7,8,9)$, with an $F$-loop at $9$. This $F$-loop precludes the possibility of $S$-connecting $9$ with any of $3,4,5,6$ (because none of them carries an $F$-loop). There are now precisely three mutually exclusive possibilities, namely
\begin{itemize}
\item[(i)] $9$ carries an $S$-loop;
\item[(ii)] $9$ is $S$-connected to a new vertex $10$ that carries both an $F$- and an $R$-loop;
\item[(iii)] $9$ is $S$-connected to a new vertex $10$ that carries an $F$-loop and an $R$-cycle $(10,11,12)$.
\end{itemize}
In each of these cases we can again construct an infinite $LN$-path, avoiding forever $\set{1,2,7,8,9}$ (in case~(i)), or 
$\set{1,2,7,8,9,10}$ (in cases~(ii) and~(iii)). This subcase is then impossible as well, and the analysis of Case~5 is completed.

As all indices from $1$ to $8$ have been realized in several nonisomorphic ways during the previous analysis, Theorem~\ref{ref13} is proved. 
\end{proof}

\begin{corollary}
If\label{ref28} all inverse branches of $T$ have positive determinant (i.e., $T$ is increasing on each interval), then $\Sigma_T$ equals either $\Gamma$ or its unique index-$2$ subgroup $\angles{R,SRS}$. It equals $\Gamma$ iff at least one of the matrices $A_a$ factors in $\Mcal$ as an $L$-$N$ product of odd length.
\end{corollary}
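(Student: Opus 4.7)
The plan is to reduce the statement to Theorem~\ref{ref13} combined with a parity argument.

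Since every $A_a$ lies in $\Mcal\subseteq\Gamma$, we have $\Sigma_T\subseteq\Gamma$ and in particular $F\notin\Sigma_T$. Observation~(A) in the proof of Theorem~\ref{ref13}, applied with $e(a)=0$ for every $a$, gives immediately $SRS\in\Sigma_T$. The crucial remaining step is to show $R\in\Sigma_T$, which I plan to do by revisiting the case analysis in that proof. Under our constraints, Cases~2, 3, and~4 of that analysis are ruled out: the vertex identifications $2=1$, $2=3$, $2=5$ in the Schreier graph amount to $\Sigma_T F=\Sigma_T R^k$ for $k=0,1,2$ respectively, hence force $F$, $FR\m$, or $FR^{-2}$ to lie in $\Sigma_T$; but each of these matrices has determinant $-1$ and therefore cannot lie in $\Gamma\supseteq\Sigma_T$. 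Case~5 is already shown impossible inside the proof of Theorem~\ref{ref13} itself, via an infinite $LN$-path contradicting~(B). Thus only Case~1 is compatible with our setup, yielding $R\in\Sigma_T$.

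Next I introduce the parity homomorphism $\phi\colon\Gamma\to Z_2$ with $\phi(S)=1$, $\phi(R)=0$; equivalently $\phi(L)=\phi(N)=1$. Since $\Gamma^{\mathrm{ab}}\simeq Z_2\times Z_3$ admits only one $Z_2$-quotient, $\ker\phi$ is the unique index-$2$ subgroup of $\Gamma$; a short Reidemeister--Schreier computation with coset representatives $\set{1,S}$ identifies it with $\angles{R,SRS}$. We have just shown $R,SRS\in\Sigma_T$, so $\ker\phi\subseteq\Sigma_T$; and because $[\Gamma:\ker\phi]=2$ this forces $\Sigma_T\in\set{\ker\phi,\Gamma}$. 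For $A_a\in\Mcal$ one has $\phi(A_a)=\abs{A_a}\bmod 2$, where $\abs{A_a}$ denotes the $L$-$N$ length, so $\Sigma_T\not\subseteq\ker\phi$ iff some $A_a$ has odd length; combined with the dichotomy above this gives $\Sigma_T=\Gamma$ iff at least one $A_a$ has odd $L$-$N$ length.

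The main subtlety is the translation of the Schreier-graph coincidences in Cases~2--4 of Theorem~\ref{ref13} into concrete group elements of $\Pi$, together with the determinant check ruling them out of $\Gamma$; beyond this bookkeeping the argument is essentially formal.
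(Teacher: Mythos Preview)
Your proof is correct and follows essentially the same approach as the paper: invoke the case analysis of Theorem~\ref{ref13}, rule out Cases~2--5 by determinant considerations (the paper phrases Case~3 as $RF\in\Sigma_T$ and Case~4 as $R^2F\in\Sigma_T$, which are the inverses of your $FR\m$ and $FR^{-2}$), and then run a parity argument. Your use of the explicit homomorphism $\phi$ is a slightly more formal packaging of the paper's direct verification that $LL,LN,NL,NN\in\angles{R,SRS}$ while $L,N\notin\angles{R,SRS}$, but the content is identical.
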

\begin{proof}
In the proof of Theorem~\ref{ref13} we discussed the five possible cases for $\Sigma_T$, cases~2 and 5 being void. Case~3 implies $RF\in\Sigma_T$, and Case~4 implies $R^2F\in\Sigma_T$; both are impossible here, since $\Sigma_T$ cannot contain elements of determinant $-1$. So we are left with Case~1; taking into account~(A), we conclude that $\Sigma_T$ extends~$\angles{R,SRS}$. Expressing $L,N$ in terms of $S,R$, one sees that $LL,LN,NL,NN\in\angles{R,SRS}$, while $L,N\notin\angles{R,SRS}$; our statements follow immediately.
\end{proof}

\section{The tail property}\label{ref15}

We briefly recall the definition of a finite-state transducer. Let an \newword{input alphabet}~$\Zcal$ and an 
\newword{output alphabet} $\Acal$ be given, both finite.
As usual, $\Zcal^*$ denotes the set of all finite words over $\Zcal$ (including the empty word), while $\Zcal^\omega$ is the set of all one-sided infinite sequences $\Zbf=Z_0Z_1Z_2\cdots$. A \newword{finite-state transducer} is a finite directed graph, whose edges are labelled by \newword{transition rules} of the form $Z|w$, with $Z\in\Zcal$ and $w\in\Acal^*$; we only consider \newword{deterministic} transducers, i.e., transducers such that, for each vertex and each $Z$, at most one edge labelled $Z|w$ leaves that vertex (in this context vertices are usually called \newword{states}).
Given the input $\Zbf$ and a vertex $v$ in a given set of initial states,
the transducer acts in the obvious way: it first checks if an edge labelled $Z_0|w_0$ starts from $v$. If so, it moves to the target vertex, checks if an edge labelled $Z_1|w_1$ starts from it, and goes on. The process stops and fails, producing no output, if a vertex is reached from which no appropriate edge starts; if this never happens the computation succeeds, yielding the output $\abf=w_0w_1w_2\cdots\in\Acal^\omega$.

For every slow algorithm $T$, the graph $\Gcal_T$ yields a canonical transducer as follows.
We take $\Zcal=\set{L,N}$ and $\Acal=\set{0,\ldots,n-1}$ as our alphabets;
it is expedient to equip $\Zcal^*$ and $\Zcal^\omega$ with the involution ${}'$ that exchanges $L$ with $N$ componentwise.
We first remove from $\Gcal_T$ all ``vertical'' $F$-edges, retaining however the handy notation $vX_0\cdots X_{r-1}$ for the vertex reached from $v$ by following the edges labelled
$\vect X0{{r-1}}\in\set{L,N,F}$, in this order.
For $Z\in\Zcal$, we relabel each $Z$-edge that does not end either at $1$ or at $2=1F$ by $Z|$. We now examine the edges terminating at $1$ or at $2$. Each
$A_a$ induces precisely two such edges; namely, as in Definition~\ref{ref17}, we write uniquely each $A_a$ as
$A_a=B_aF^{e(a)}=C_aZF^{e(a)}$, with $C_a\in\Mcal$. Then, by construction, $\Gcal_T$ contains a $Z$-edge from $C_a$ to either $1$ (if $e(a)=0$) or $2$ (if $e(a)=1$), as well as a $Z'$-edge from $C_aF$ to either $2$ (if $e(a)=0$) or $1$ (if $e(a)=1$).
We relabel these two edges by $Z|a$ and $Z'|a$, respectively; repeating this relabeling for each $A_a$, and taking $\set{1}$ as the set of initial states, leaves us with a transducer, again denoted by~$\Gcal_T$.
Since for each vertex of $\Gcal_T$ and each $Z$ precisely one edge labelled $Z|w$ starts from that vertex, this transducer is deterministic and succeeds at every input. As an example, Figure~\ref{fig14} shows ``one half'' (see~\S\ref{ref5}) of the transducer determined by the map $T$ in Example~\ref{ref26}.

For $\abf=a_0a_1\cdots\in\Acal^\omega$ and $\alpha\in[0,\infty]$, we write
$\sigma(\abf)=\alpha$ if $\abf$ is a $T$-symbolic orbit for $\alpha$, i.e., $T^t(\alpha)\in\Delta_{a_t}$ for every $t\ge0$.
Every irrational number has precisely one symbolic orbit, while every rational has at most two orbits. For the slow map $\set{L,N}$ of 
Example~\ref{ref16}, which has a distinguished status, we'll use $\pi:\Zcal^\omega\to [0,\infty]$ instead of~$\sigma$.
By the description in~\cite{series85},
writing $\pi(\Zbf)=\alpha$ is equivalent to saying that starting
from an arbitrary point $\beta$ in the imaginary axis $\Rbb\pp i$ of the hyperbolic upper-half plane and moving along the geodesic arc connecting $\beta$ to $\alpha$, the resulting cutting sequence of the Farey tessellation is $\Zbf$ (note that 
our $N,L$ are Series's $L,R$).
We abuse language by writing $\pi$ also for the map that associates to a word $W\in\Zcal^*$ the matrix in $\Gamma$ resulting from reading $W$ as an ordinary matrix product. This abuse is justified by the identity $\pi(W\Zbf)=\pi(W)*\pi(\Zbf)$, that is immediately proved by induction on the length of $W$.

\begin{lemma}
Let\label{ref18} $\pi(\Zbf)=\alpha$. Then the $T$-symbolic orbit of $\alpha$ is the output of $\Gcal_T$ at $\Zbf$.
\end{lemma}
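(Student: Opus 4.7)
The plan is induction on the output position, leveraging the identification between the tree portion of $\Gcal_T$ and the Farey subdivision tree that produces the partition $\set{\Delta_a}$: upper-tree vertices are exactly the left factors in $\Mcal$ of the $B_a$'s, and each leaf $B_a$ corresponds to $\Delta_a=B_a*\Delta$. Given $\pi(\Zbf)=\alpha$, let $a_0$ be the index with $\alpha\in\Delta_{a_0}$. By Series's description of $\pi$ as the Farey cutting-sequence map, the successive prefixes $\pi(Z_0\cdots Z_{t-1})\in\Mcal$ label the nested Farey intervals collapsing to $\alpha$. Since $B_{a_0}$ is the unique leaf of the partition tree lying above $\alpha$, there is a unique time $t_1=|B_{a_0}|$ at which $Z_0\cdots Z_{t_1-1}=B_{a_0}$.

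Now I track the transducer on this prefix starting at state~$1$: the first $t_1-1$ edges are internal, labeled $Z|$, and walk us from $1$ down to $C_{a_0}$, the parent of $B_{a_0}$ in the tree; the final edge is the relabeled $Z_{t_1-1}|a_0$, emitting the first output symbol $a_0$ and leaving us at state~$1$ (if $e(a_0)=0$) or at state~$2$ (if $e(a_0)=1$). This matches the first $T$-symbolic coordinate of $\alpha$. Factoring $\Zbf=B_{a_0}\Zbf_1$ and using $A_{a_0}\m=F^{e(a_0)}B_{a_0}\m$ yields $\pi(\Zbf_1)=B_{a_0}\m*\alpha=F^{e(a_0)}*T(\alpha)$. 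In the case $e(a_0)=0$ we are back at state~$1$ with $\pi(\Zbf_1)=T(\alpha)$, so the inductive hypothesis immediately produces the remaining $T$-symbolic orbit. The case $e(a_0)=1$ is handled via the $F$-twin involution $\tau\colon B\leftrightarrow BF$, which is an automorphism of $\Gcal_T$ swapping states $1\leftrightarrow 2$ and exchanging $L$- with $N$-labels on input while preserving output labels (an easy consequence of $FLF=N$ and of the symmetric way in which upper- and lower-tree leaves are relabeled). Therefore running the transducer from state~$2$ on $\Zbf_1$ equals running it from state~$1$ on the swap $\Zbf_1'$; since $\pi(\Zbf_1')=F*\pi(\Zbf_1)=T(\alpha)$, induction finishes the job. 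The inductive step is well-founded because the tree is finite and all tree edges point downward, so every run must hit a relabeled edge after finitely many steps.

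The main obstacle is the verification that $\tau$ is genuinely an automorphism of the transducer (and not just of its underlying labeled graph), preserving output labels in such a way that the case $e(a_0)=1$ reduces cleanly to $e(a_0)=0$ via the substitution $\Zbf_1\mapsto\Zbf_1'$; a mild additional care is needed for rational $\alpha$ lying at a Farey vertex, where the two possible choices of $\Zbf$ are paired consistently with the two possible choices of $a_0$ and hence of $T$-symbolic orbit.
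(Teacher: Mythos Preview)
Your argument is correct and is, at its core, the same induction-on-output-symbols as the paper's: one checks that the transducer, started in the appropriate state, reads off exactly one inverse branch $A_a$ before returning to $\set{1,2}$, emits $a$, and is left facing the $\pi$-preimage of $T(\alpha)$. The only genuine difference is in bookkeeping for the state-$2$ case. You handle it by invoking the $F$-twin involution $\tau$ of $\Gcal_T$ to reduce ``run from $2$ on $\Zbf_1$'' to ``run from $1$ on $\Zbf_1'$''. The paper instead introduces \emph{marked words} ${}_iW_j$ and \emph{marked sequences} ${}_i\Ybf$, extends $\pi$ to them via $\pi({}_2\Ybf)=F*\pi(\Ybf)$ and $\pi({}_iW_j)=F^{i-1}\pi(W)F^{j-1}$, and carries the induction uniformly over $i\in\set{1,2}$ without ever swapping $L\leftrightarrow N$. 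These are two packagings of the same $F$-symmetry; yours is marginally more concrete, the paper's is set up so that the marked-word calculus can be reused verbatim in the proofs of Theorem~\ref{ref20} and Corollary~\ref{ref27}, where one must track visits to both $1$ and $2$ along a single run. If you intend to prove those later results as well, it is worth recasting your involution in the paper's notation.
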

\begin{proof}
The proof is straightforward, but we use it to introduce a formalism and an alternative description of $\Gcal_T$ that will be used later on.
We start at the root of~$\Gcal_T$ and follow the path determined by $\Zbf$, producing no output but taking note of our visits to $1$ and $2$ by inserting~$1$ and $2$ as indices along~$\Zbf$. We call the times $0=t_0<t_1<t_2<\cdots$ at which these visits take place the \newword{hitting times}. For example, 
let $\Zbf=NLLNNLNNNL\overline{LNL}$, whose $\pi$-image is $\alpha=(1335+\sqrt{3})/939$. Applying the transducer in Figure~\ref{fig7} we obtain
the annotated sequence
\begin{equation}\tag{$*$}
{}_1N_1LLN_2NLN_2NNL_1LNL_1LNL_1L\cdots.
\end{equation}
For $W\in\Zcal^*$, $\Ybf\in\Zcal^\omega$, $i,j\in\set{1,2}$, we call ${}_iW_j$ and ${}_i\Ybf$ a \newword{marked word} and a \newword{marked sequence}, respectively. We extend $\pi$ to marked words and sequences by defining
\begin{align*}
\pi({}_1W_1)&=\pi(W),&
\pi({}_1W_2)&=\pi(W)F,\\
\pi({}_2W_1)&=F\pi(W)=\pi(W')F,&
\pi({}_2W_2)&=\pi(W'),\\
\pi({}_1\Ybf)&=\pi(\Ybf),&
\pi({}_2\Ybf)&=F*\pi(\Ybf)=1/\pi(\Ybf).
\end{align*}
These definitions are concocted so that the identity $\pi({}_iW_j)*\pi({}_j\Ybf)=\pi({}_iW\Ybf)$ holds, as can be easily verified.

Now, each marked word ${}_iW_j$ appearing in a sequence such as~$(*)$ corresponds to a path from $i$ to $j$ making no intermediate visits to either $1$ or $2$; we call such a path \newword{primitive}. Also, by construction, upon completing a primitive path the transducer outputs the symbol $a$ determined by $\pi({}_iW_j)=A_a$.
We can then move along the annotated sequence and read the output $\abf=a_0a_1\cdots$ by computing~$\pi$ on the successive marked words. For example, the input
$\Zbf$ above yields the output
$\abf=4121\overline{2}$, and indeed $\pi({}_1N_1)=N=A_4$, $\pi({}_1LLN_2)=LLNF=A_1$, $\pi({}_2NLN_2)=LNL=A_2$, and so on.

The statement of the lemma follows now by induction on the set of hitting times.
Suppose that at the hitting time $t_k$ ($k\ge0$) the transducer produced the correct output $a_0\cdots a_{k-1}$, is in state $i$, and is to read the sequence ${}_iW_j\Ybf$ whose $\pi$-image is $T^k(\alpha)$. Let $\pi({}_iW_j)=A_a$; then $T^k(\alpha)=A_a*\pi({}_j\Ybf)\in\Delta_a$. It follows that the next symbol in the $T$-symbolic sequence of $\alpha$ is $a_k=a$, which agrees with the transducer output in its moving from $i$ to $j$ along $W$. Also, $T^{k+1}(\alpha)=A_a\m*T^k(\alpha)=\pi({}_j\Ybf)$, so that at the next hitting time $t_{k+1}$ the transducer is in state $j$ and is to read the sequence ${}_j\Ybf$, as required.
\end{proof}

The map $\sigma\m$ from the Baire space $\Rbb\pp\setminus\Qbb$ 
to the Cantor space $\Acal^\omega$ is injective and bicontinuous, hence
a homeomorphism from its domain to its image $\Acal^\sharp$.
Writing $\Sigma$ for $\Sigma_T$, on $\Acal^\sharp$ we have two equivalence relations:
\begin{itemize}
\item $\abf$ and $\bbf$ \newword{have the same tail}, written $\abf\equiv_t\bbf$, if there exist $h,k\ge0$ such that, for every $l\ge0$, $a_{h+l}=b_{k+l}$;
\item $\abf$ and $\bbf$ \newword{are $\Sigma$-equivalent}, written $\abf\equiv_\Sigma\bbf$, if $\sigma(\abf)$ and $\sigma(\bbf)$ are $\Sigma$-equivalent according to~\S\ref{ref1}(5).
\end{itemize}
As remarked in~\S\ref{ref1} and before Definition~\ref{ref19}, $\equiv_\Sigma$ is coarser than $\equiv_t$: each $\Sigma$-equivalence class is partitioned into tail-equivalence classes, and the Serret theorem holds for~$T$ precisely when these two relations agree.

\begin{definition}
In\label{ref22} the proof of Lemma~\ref{ref18} we noted that
the primitive paths in~$\Gcal_T$ correspond to marked words ${}_iW_j$ ($i,j\in\set{1,2}$). If $W=Z_0\cdots Z_{l-1}$, then the path has \newword{length} $l$ and contains the vertices $i,iZ_0,iZ_0Z_1,\ldots,iW=j$, which are all distinct, except for the possibility $i=j$. The $F$-twins of these vertices constitute another primitive path, namely ${}_{iF}W'_{jF}$, and $\pi({}_iW_j)=\pi({}_{iF}W'_{jF})=A_a$ for some $a$. Each $A_a$ corresponds as above to precisely these two primitive paths; let $D_a$ be the set of vertices along them, of cardinality $2l$. Let $\varphi\m\set{1}$ be the counterimage of $1$ under the opfibration $\varphi:\Gcal_T\to\Scal_T$. We define the \newword{defect} of $T$ to be the maximum, say $d$, of the cardinalities of $D_a\cap\varphi\m\set{1}$, as $a$ varies in $\set{0,\ldots,n-1}$.
\end{definition}

\begin{theorem}
Let\label{ref20} $\abf\in\Acal^\sharp$. Then the $\Sigma$-equivalence class of $\abf$ is partitioned in at most~$d$ 
tail-equivalence classes.
\end{theorem}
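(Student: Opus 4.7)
The plan is to transfer the counting of tail-equivalence classes to the input side of the transducer $\Gcal_T$ via Lemma~\ref{ref18}, then to localize the possible ``synchronization states'' inside a single primitive path.

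First, for each $\beta\equiv_\Sigma\alpha=\sigma\m(\abf)$, by Lemma~\ref{ref18} the sequence $\sigma(\beta)$ is the output of $\Gcal_T$ on $\Ybf_\beta=\pi\m(\beta)$. I would invoke the classical Serret theorem for the slow Farey map (Example~\ref{ref16}, whose branches generate $\Gamma$, an index-$2$ subgroup of $\Pi$): if $\beta=g*\alpha$ with $g\in\Sigma\le\Pi$, then $\Ybf_\beta$ and $\Zbf:=\pi\m(\alpha)$ share an eventual tail $W$, up to the componentwise $L\leftrightarrow N$ swap induced by $F$ when $\det g=-1$. Writing $\Ybf_\beta=Y_0\cdots Y_{k-1}W$ and $\Zbf=Z_0\cdots Z_{l-1}W$, the identity $g=\pi(Y_0\cdots Y_{k-1})\pi(Z_0\cdots Z_{l-1})\m\in\Sigma$ shows that the transducer states $v_\beta,v_\alpha\in\Gcal_T$ after reading the respective prefixes have the same $\varphi$-image, so lie in a common fiber of the opfibration.

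Second, using the marked-word formalism developed in the proof of Lemma~\ref{ref18}, the tail of $\sigma(\beta)$ is the transducer output starting from $v_\beta$ on input $W$; similarly the tail of $\sigma(\alpha)$ comes from $v_\alpha$ reading $W$. If $v_\beta=v_\alpha$ the two outputs coincide, hence are tail-equivalent. Consequently, tail classes within the $\Sigma$-class of $\abf$ inject into the set of possible values of the synchronization state $v_\beta$, modulo the equivalence ``produces tail-equivalent output on $W$''.

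Third, and this is where the defect enters, I would advance the synchronization to a hitting time $l$ of $\Zbf$'s $T$-trajectory at which $\pi(Z_0\cdots Z_{l-1})\in\Sigma$. Such an $l$ exists because the $\Zbf$-trajectory projects via $\varphi$ to a path in the finite Schreier graph $\Scal(\Sigma_T)$, which must recur, and arranging recurrence through the root coset uses the opfibration of Theorem~\ref{ref9}. At such an $l$ we have $v_\alpha\in\set{1,2}$, so $v_\alpha\in\varphi\m\set{1}$ and $v_\beta\in\varphi\m\set{1}$ too. Let $a$ be the symbol produced by $\alpha$ at time $l$, so that $v_\alpha$ sits at the start of the primitive path $D_a$; tracking the lifts of the common Schreier-graph path shows that $v_\beta$ must itself be a vertex of~$D_a$. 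Hence $v_\beta\in D_a\cap\varphi\m\set{1}$, whose cardinality is at most $d$ by definition of the defect, and the theorem follows. The main obstacle is pinning down this third step: ensuring recurrence of the $\Zbf$-trajectory through $\varphi\m\set{1}$ for every irrational $\alpha$ (not merely almost every), verifying that $v_\beta$ genuinely lands on $D_a$ rather than elsewhere in the fiber, and handling the $\det g=-1$ branch (with its $F$-swap and the possible splitting of the root fiber into two ``halves'' when $F\notin\Sigma$) symmetrically.
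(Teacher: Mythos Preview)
Your overall plan---pass to the Farey input side via Lemma~\ref{ref18}, use the opfibration to compare fibers, then bound by the defect---matches the paper's architecture, and your first two steps are essentially sound. The third step, however, has a genuine gap, and it is precisely the obstacle you flag. The assertion that $v_\beta$ must lie in $D_a$ is false in general: the hypothesis $\varphi(v_\beta)=\varphi(v_\alpha)$ only places $v_\beta$ in the fiber $\varphi^{-1}\{\varphi(v_\alpha)\}$, and nothing forces it onto the particular primitive path the $\alpha$-run is about to traverse. In Example~\ref{ref21} the fiber $\varphi^{-1}\{1\}$ is all $14$ vertices of $\Gcal_T$ while each $D_a$ has only~$6$, so your injection would at best yield the bound $\lvert\varphi^{-1}\{1\}\rvert$, not~$d$.

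The paper's mechanism is subtler. One first reduces (its Stage~2) every $\gamma\equiv_\Sigma\alpha$ to a ``restarted run'' $\beta_r=\pi\bigl({}_{i_r}Z_{t_r}Z_{t_r+1}\cdots\bigr)$ of $\Gcal_T$ on a tail of~$\Zbf$. Then, given $d+1$ such runs, one selects a window $[q_0,q_0+m)$ in~$\Zbf$ that is an entire primitive path of maximal length for one of them (the $0$th). Every other run must hit $\{1,2\}$ at some position $q_r$ inside this window, and the point placed in $D\cap\varphi^{-1}\{1\}$ is the \emph{$0$th run's} state $v(q_r)$ (or its $F$-twin)---not the $r$th run's state, which is trivially $1$ or $2$. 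Pigeonholing these $d+1$ points into a set of size at most~$d$ forces two runs to agree at a common hitting time, hence to have tail-equivalent outputs. Note also that this ``fix $d+1$ runs first, choose the window afterwards'' order is essential: your scheme of fixing a single~$l$ in advance cannot dominate the unbounded synchronization points of the infinitely many $\beta\equiv_\Sigma\alpha$, and your recurrence argument (``the projected path must recur through the root coset'') does not establish the existence of such an~$l$ either, since recurrence in a finite graph does not force revisiting a prescribed vertex.
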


As an immediate corollary, the condition $\varphi\m\set{1}=\set{1}$ is sufficient for the validity of the Serret theorem. This condition holds, e.g., for the pythagorean map of Example~\ref{ref24}; by explicit computation one checks that $\Sigma$ has index $3$, while $\varphi\m\set{1}$ is trivial.
It is not a necessary condition, as witnessed by the map $T'$ of Example~\ref{ref26}; we'll provide a complete characterization in Corollary~\ref{ref27}.

\begin{example}
Let\label{ref21} $T$ be defined by
\begin{align*}
A_0&=LLL,&
A_1&=LLN,&
A_2&=LNLF,&
A_3&=LNN,\\
A_4&=NLL,&
A_5&=NLN,&
A_6&=NNL,&
A_7&=NNN.
\end{align*}
Computing the opfibration $\varphi:\Gcal_T\to\Scal(\Sigma)$ one sees that $\Scal(\Sigma)$ is trivial, so $\Sigma=\Pi$ and $\varphi\m\set{1}=(\text{all vertices of $\Gcal_T$})$.
Each primitive path in $\Gcal_T$ has length $3$, therefore $T$ has defect $6$.

Let $n_1,n_2,\ldots$ be any sequence in $\Zbb_{>0}$, let
\[
\Zbf=(LLNNLL)^{n_1}LLL(LLNNLL)^{n_2}LLL\cdots,
\]
and let $\alpha=\pi(\Zbf)$. Chasing paths along $\Gcal_T$, one checks that:
\begin{align*}
&\alpha &\text{has $T$-symbolic orbit}\qquad
&(14)^{n_1}0(14)^{n_2}0\cdots,\\
&F*\alpha &\text{has $T$-symbolic orbit}\qquad 
&(63)^{n_1}7(63)^{n_2}7\cdots,\\
&L\m*\alpha &\text{has $T$-symbolic orbit}\qquad 
&(30)^{n_1}0(30)^{n_2}0\cdots,\\
&FL\m*\alpha &\text{has $T$-symbolic orbit}\qquad 
&(47)^{n_1}7(47)^{n_2}7\cdots,\\
&L^{-2}*\alpha &\text{has $T$-symbolic orbit}\qquad
&(60)^{n_1}0(60)^{n_2}0\cdots,\\
&FL^{-2}*\alpha &\text{has $T$-symbolic orbit}\qquad 
&(17)^{n_1}7(17)^{n_2}7\cdots.
\end{align*}
Here the Serret theorem fails as badly as possible: there are $2^{\aleph_0}$ counterexamples, each of them breaking its $\Sigma$-equivalence class in the maximum available number of tail-equivalence classes.
\end{example}

The proof of Theorem~\ref{ref20} proceeds in three stages. We first note that, having started $\Gcal_T$ on input $\Zbf$ from the root, we'll arrive infinitely often
to a vertex $v$ such that either $\varphi(v)=1$ or $\varphi(vF)=1$.
Every time we hit such a $v$, we restart the transducer either from the root (if $\varphi(v)=1$) or from $2$ (if $\varphi(vF)=1$), feeding it with the remaining input. The resulting outputs are then the $T$-symbolic orbits of numbers $\beta$ $\Sigma$-equivalent to $\alpha$. In the second part of the proof we will show that every $\gamma$ $\Sigma$-equivalent to $\alpha$ is tail-equivalent to a $\beta$ of the form given by the first part. Finally, we will prove that the set of such $\beta$'s is partitioned in at most $d$ tail-equivalence classes.

\begin{proof}[Proof of Theorem~\ref{ref20}]
The first part is easy: let $\Zbf=W\Ybf$ be a factorization of $\Zbf$ such that $\pi({}_1W_i)\in\Sigma$ for a certain $i\in\set{1,2}$. Then $\beta=\pi({}_i\Ybf)\equiv_\Sigma\alpha$, and the proof of Lemma~\ref{ref18} shows that 
the $T$-symbolic orbit of $\beta$ is the output of $\Gcal_T$ when restarted in state $i$ and fed with $\Ybf$.

For the second part, let $\gamma=M*\alpha\in[0,\infty]$ for some $M\in\Sigma$, and let $W(t)=Z_0\cdots Z_{t-1}$ be the initial segment of $\Zbf$ of length $t$. Then $\pi\bigl(W(t)\bigr)*[0,\infty]$ is a unimodular interval that shrinks, as~$t$ increases, to $\alpha$, so that 
$M\pi\bigl(W(t)\bigr)*[0,\infty]$ is a unimodular interval shrinking to~$\gamma$.
Since $\gamma\in[0,\infty]$, we can choose $t$ (fixed from now on) so large that $[p/q,p'/q']=
M\pi\bigl(W(t)\bigr)*[0,\infty]$ is a subinterval of $[0,\infty]$; its extrema $p/q,p'/q'$ are the
$M\pi\bigl(W(t)\bigr)$-images of $0,\infty$, in this or the other order according to $M$ being of determinant $+1$ or $-1$. By \S\ref{ref1}(2), there exists a unique 
$U\in\Zcal^*$ such that $\pi(U)=\bbmatrix{p'}{p}{q'}{q}$. Therefore $M\pi\bigl(W(t)\bigr)\bbmatrix{0}{1}{1}{0}$ equals $\pi(U)F$ or $\pi(U)$, according to the positive or negative sign of $\det(M)$; in short, 
$M\pi\bigl(W(t)\bigr)=\pi(U)F^e$, with $e=0$ if $\det(M)=1$ and $e=1$ otherwise. As the sequence $\Xbf\in\Zcal^\omega$
such that $\pi(\Xbf)=\gamma$ is unique, we must have:
\begin{itemize}
\item if $e=0$, then $\Xbf=UZ_tZ_{t+1}\cdots$,
\item if $e=1$, then $\Xbf=UZ'_tZ'_{t+1}\cdots$.
\end{itemize}
Therefore there exists $q\ge t$ and $i\in\set{1,2}$ such that, writing $\Lcal$ for the monoid generated by $\vect A0{{n-1}}$, we have:
\begin{itemize}
\item if $e=0$, then $\pi\bigl({}_1(UZ_t\cdots Z_{q-1})_i\bigr)\in\Lcal$ and $\gamma\equiv_t\pi\bigl({}_i(Z_q Z_{q+1}\cdots)\bigr)$,
\item if $e=1$, then $\pi\bigl({}_1(UZ'_t\cdots Z'_{q-1})_i\bigr)\in\Lcal$ and $\gamma\equiv_t\pi\bigl({}_i(Z'_q Z'_{q+1}\cdots)\bigr)=
\pi\bigl({}_{iF}(Z_q Z_{q+1}\cdots)\bigr)$.
\end{itemize}

Now:
\begin{itemize}
\item if $e=0$, then
\begin{multline*}
\pi\bigl({}_1(Z_0\cdots Z_{q-1})_i\bigr)=\pi\bigl(W(t)\bigr)\,\pi\bigl({}_1(Z_t\cdots Z_{q-1})_i\bigr)\\
=M\m\pi(U)\,
\pi\bigl({}_1(Z_t\cdots Z_{q-1})_i\bigr)=M\m\,
\pi\bigl({}_1(UZ_t\cdots Z_{q-1})_i\bigr)
\end{multline*}
belongs to $\Sigma$,
\item if $e=1$, then
\begin{multline*}
\pi\bigl({}_1(Z_0\cdots Z_{q-1})_{iF}\bigr)=\pi\bigl(W(t)\bigr)\,\pi\bigl({}_1(Z_t\cdots Z_{q-1})_{iF}\bigr)\\
=M\m\pi(U)\,F\,
\pi\bigl({}_1(Z_t\cdots Z_{q-1})_{iF}\bigr)=M\m\,
\pi\bigl({}_1(UZ'_t\cdots Z'_{q-1})_i\bigr)
\end{multline*}
belongs to $\Sigma$ too.
\end{itemize}
We conclude that $\gamma$ is tail-equivalent to a $\beta$ of the form given in the first part of the proof, namely $\beta=\pi\bigl({}_i(Z_qZ_{q+1}\cdots)\bigr)$ if $e=0$, and $\beta=\pi\bigl({}_{iF}(Z_qZ_{q+1}\cdots)\bigr)$ if $e=1$.

We finally show that any set of $d+1$ $\beta$'s obtained as above must contain two tail-equivalent elements. Let us say that this set has been obtained by stopping $\Gcal_T$ on input $\Zbf$ at times $t_0<t_1<\cdots<t_d$, the transducer being in state $\vect v0d$, respectively. For each $0\le r\le d$,
let $i_r:=1$ if $\varphi(v_r)=1$ and $i_r:=2$ if $\varphi(v_rF)=1$.
We refer to the run of $\Gcal_T$ when restarted from state $i_r$ and fed with $Z_{t_r}Z_{t_r+1}\cdots$ as the \newword{$r$th run}.
Let $m_r$ be the limsup of the lengths of the primitive paths occurring during the $r$th run; without loss of generality $m=m_0\ge\vect m1d$.
Denoting the vertex $1Z_0\cdots Z_{q-1}$ by $v(q)$, we choose some
$q_0 > t_d$ such that
$\set{v(q_0),v(q_0+1),\ldots,v(q_0+m)}$ is a primitive path of length $m$ for the $0$th run,
while for every other run it is a path containing $1$ or $2$ at least once; this choice is possible due to our assumptions on $m$.
Let
\[
f:\set{q_0,\ldots,q_0+m-1}\times\set{1,2}\to(\text{vertices of $\Gcal_T$})
\]
be defined by
\[
f(q,j)=\begin{cases}
v(q),&\text{if $j=1$;}\\
v(q)F,&\text{if $j=2$.}
\end{cases}
\]
Then $f$ is injective, and its image is the set $D$ of all vertices and their $F$-twins in the primitive path of length $m$ for the $0$th run referred to above.
Note that the two final vertices $\set{v(q_0+m),v(q_0+m)F}$ are not missing from $D$, since they appear as the initial ones $\set{v(q_0),v(q_0)F}$.
Due to the assumptions on $q_0$, for each $0\le r\le d$ there exists
a pair $(q_r,j_r)\in\set{q_0,\ldots,q_0+m-1}\times\set{1,2}$
such that $j_r=i_rZ_{t_r}\cdots Z_{q_r-1}$.
We claim that $f(q_r,j_r)\in\varphi\m\set{1}$ for every $r$. Indeed, $\pi\bigl({}_1(Z_0\cdots Z_{t_r-1})_{i_r}\bigr)\in\Sigma$ and $\pi\bigl({}_{i_r}(Z_{t_r}\cdots Z_{q_r-1})_{j_r}\bigr)\in\Lcal$, so that
$\pi\bigl({}_1(Z_0\cdots Z_{q_r-1})_{j_r}\bigr)\in\Sigma$, as claimed.
Since the cardinality of $D\cap\varphi\m\set{1}$ is less than or equal to the defect $d$ of $\Gcal_T$, by the pigeonhole principle there must be $r\not=s\in\set{0,\ldots,d}$ such that $(q_r,j_r)=(q_s,j_s)$. This implies that the $r$th and the $s$th run yield tail-equivalent outputs.
\end{proof}

Theorem~\ref{ref20} (or, rather, its proof) yields a characterization of the $T$'s satisfying the tail property. It is expedient to introduce another transducer, $\Fcal_T$, whose set of states is $\varphi\m\set{1}=\set{1=P_1,P_2,\ldots,P_m}\subseteq\set{\text{states of }\Gcal_T}$. By the construction in Definition~\ref{ref17}, each $P_r$ is of the form $B$ or $BF$, where $B$ is a left factor of one of $\vect A0{{n-1}}$. For each $P_r$ and each $A_a$ we have a unique commutator relation $P_rA_a=A_{b_1}\cdots A_{b_q}P_s$ (the product $A_{b_1}\cdots A_{b_q}$ may be empty), and we add to $\Fcal_T$ an edge from $P_r$ to $P_s$ labelled with the transition rule $a|b_1\cdots b_q$.

\begin{lemma}
Let\label{ref29} $\alpha$ have $T$-symbolic orbit $\abf$. Then the $T$-symbolic orbit of $P_r*\alpha$ is the output of $\Fcal_T$, when starting from $P_r$ on input $\abf$.
\end{lemma}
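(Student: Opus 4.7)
The plan is to proceed by induction on the number of input symbols $\Fcal_T$ has consumed, carrying the invariant that after reading $a_0\cdots a_{t-1}$ the transducer sits in state $P_{s(t)}$ and has emitted $w_t\in\Acal^*$, where $w_t$ is an initial segment of the $T$-symbolic orbit of $P_r*\alpha$ and
$$T^{|w_t|}(P_r*\alpha)=P_{s(t)}*T^t(\alpha).$$
The base case $t=0$ is immediate: $w_0$ is empty, $P_{s(0)}=P_r$, and the equality is tautological.

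For the inductive step, reading $a_t$ invokes the commutator relation $P_{s(t)}A_{a_t}=A_{b_1}\cdots A_{b_q}P_{s(t+1)}$ and appends $b_1\cdots b_q$ to the output. Since $T^t(\alpha)\in\Delta_{a_t}$ gives $T^t(\alpha)=A_{a_t}*T^{t+1}(\alpha)$, substitution into the invariant produces
$$T^{|w_t|}(P_r*\alpha)=A_{b_1}\cdots A_{b_q}P_{s(t+1)}*T^{t+1}(\alpha).$$
I would then verify for $k=1,\ldots,q$ in turn that $A_{b_k}\cdots A_{b_q}P_{s(t+1)}*T^{t+1}(\alpha)\in\Delta_{b_k}$, which simultaneously identifies $b_k$ as the next term of the $T$-symbolic orbit of $P_r*\alpha$ and reduces the problem by one $T$-step; after $q$ such reductions we land at $P_{s(t+1)}*T^{t+1}(\alpha)$, exactly the invariant at time $t+1$. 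The decisive ingredient is the observation that every state $P\in\varphi\m\set{1}$ has, by Definition~\ref{ref17}, the form $B$ or $BF$ with $B\in\Mcal$; combining with $F*\Delta=\Delta$ and the nonnegativity of entries in $\Mcal$ gives $P*\Delta\subseteq\Delta$. Hence $P_{s(t+1)}*T^{t+1}(\alpha)\in\Delta$, each intermediate image $A_{b_{k+1}}\cdots A_{b_q}P_{s(t+1)}*T^{t+1}(\alpha)$ remains in $\Delta$, and acting by $A_{b_k}$ lands it in $A_{b_k}*\Delta=\Delta_{b_k}$ as required.

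The main obstacle I anticipate is not the induction itself but a preliminary verification: that the commutator relations $P_rA_a=A_{b_1}\cdots A_{b_q}P_s$ defining $\Fcal_T$ really do exist and are uniquely determined. For this I would argue that $P_rA_a*\Delta$ is a unimodular sub-interval of $\Delta$ (because $P_r*\Delta\subseteq\Delta$ and $A_a*\Delta=\Delta_a\subseteq\Delta$), then peel off the unique leading $A_{b_i}$ such that the successive image is contained in $\Delta_{b_i}$, continuing until the remaining matrix is the $\Gcal_T$-vertex of an element of $\varphi\m\set{1}$. Termination is forced by the finiteness of the double tree underlying $\Gcal_T$, and determinism follows from the partition property of $\set{\Delta_0,\ldots,\Delta_{n-1}}$. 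Once these relations are justified the induction above closes the proof.
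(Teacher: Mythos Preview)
Your proposal is correct and proceeds along essentially the same line as the paper's proof, only in far more detail. The paper dispatches the lemma in a single sentence, invoking the limit representation $\alpha=\lim_{t\to\infty}A_{a_0}\cdots A_{a_{t-1}}*\infty$ and the continuity of $P_r*\text{--}$; your explicit induction with the invariant $T^{|w_t|}(P_r*\alpha)=P_{s(t)}*T^t(\alpha)$, together with the observation $P*\Delta\subseteq\Delta$ for every $P\in\varphi\m\set{1}$, is the step-by-step unpacking of exactly that argument. Your preliminary verification that the commutator relations $P_rA_a=A_{b_1}\cdots A_{b_q}P_s$ exist and land in $\varphi\m\set{1}$ is something the paper simply takes for granted when defining $\Fcal_T$; your justification is sound, though the phrase ``termination is forced by the finiteness of the double tree'' would be more precisely stated as ``the $\Mcal$-length of the remaining matrix strictly decreases at each peel, and the process halts once that length is short enough for the matrix to label a vertex of $\Gcal_T$, which then lies in $\varphi\m\set{1}$ because everything involved is in $\Sigma$.''
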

\begin{proof}
Straightforward, using the facts that $\alpha=\lim_{t\to\infty}A_{a_0}A_{a_1}\cdots A_{a_{t-1}}*\infty$ and that $P_r*\text{--}\,$ is continuous.
\end{proof}

We save space by removing from $\Fcal_T$ the state $P_1$ and all edges entering it; since the edges leaving $P_1$ are loops labelled $a|a$, they are automatically removed. Call $\Fcal^*_T$ the resulting graph, which may be empty if $\varphi\m\set{1}=\set{1}$; this is the trivial case cited after the statement of Theorem~\ref{ref20}.

\begin{corollary}
Let\label{ref27} $T$, $\Fcal^*_T$ be as above. Then the Serret theorem holds for $T$ iff, for every input sequence $\abf\in\Acal^\omega$ and every state $P_r$ of $\Fcal^*_T$,
\begin{itemize}
\item either $\Fcal^*_T$ eventually stops;
\item or $\Fcal^*_T$ runs forever, producing an output $\bbf$ tail-equivalent to $\abf$.
\end{itemize}
\end{corollary}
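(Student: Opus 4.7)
The plan is to read the $\Fcal^*_T$-condition, via Lemma~\ref{ref29}, as the pointwise statement
\[
(\star)\qquad P*\alpha'\text{ has symbolic orbit tail-equivalent to that of $\alpha'$, for every $P\in\varphi\m\set{1}$ and every eligible $\alpha'$,}
\]
and then to relate $(\star)$ to the Serret property via the restart construction developed in the proof of Theorem~\ref{ref20}.

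For the forward direction, fix a state $P_r$ of $\Fcal^*_T$ and any input $\abf\in\Acal^\sharp$, and set $\alpha=\sigma(\abf)$. Since $P_r\in\varphi\m\set{1}$ means $P_r\in\Sigma$, the Serret property gives that the symbolic orbit of $P_r*\alpha$ is tail-equivalent to $\abf$; by Lemma~\ref{ref29} this orbit is exactly the output of $\Fcal_T$ started at $P_r$ on input $\abf$. If the $\Fcal^*_T$-run enters $P_1$ it stops there (first clause of the condition); otherwise its output coincides with that of $\Fcal_T$, which we have just shown is tail-equivalent to $\abf$ (second clause).

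For the converse, assume the stated condition. Reading it back through Lemma~\ref{ref29} gives $(\star)$, and applying $(\star)$ to $\alpha''=P\m*\alpha'$ immediately yields the symmetric statement with $P\m$ in place of $P$. Fix $\Sigma$-equivalent irrationals $\alpha,\gamma\in\Delta$ with eternal forward orbits, and write $\gamma=M*\alpha$ with $M\in\Sigma$. By the second part of the proof of Theorem~\ref{ref20}, $\gamma\equiv_t\beta$ with $\beta=\pi({}_i(Z_qZ_{q+1}\cdots))$, where $\Zbf$ is the $\set{L,N}$-expansion of $\alpha$ and $i\in\set{1,2}$ is chosen so that $\pi({}_1(UZ_t\cdots Z_{q-1})_i)\in\Lcal$. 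The crux is the claim
\[
(\dagger)\qquad\beta=P\m*T^k(\alpha)\quad\text{for some $P\in\varphi\m\set{1}$ and some $k\ge0$,}
\]
for once $(\dagger)$ is in hand, the inverse form of $(\star)$ yields $\beta\equiv_t T^k(\alpha)\equiv_t\alpha$ and hence $\gamma\equiv_t\alpha$, which is Serret.

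To prove $(\dagger)$, I decompose the prefix of $\Zbf$ along the hitting times of $\Gcal_T$ run on $\Zbf$: letting $q_k$ be the last hitting time at or before $q$ with exit state $j_k\in\set{1,2}$, the concatenation identity from the proof of Lemma~\ref{ref18} gives $R:=\pi({}_1(Z_0\cdots Z_{q-1})_i)=A_{a_0}\cdots A_{a_{k-1}}\cdot P$, where $P:=\pi({}_{j_k}(Z_{q_k}\cdots Z_{q-1})_i)=F^{j_k-1}\pi(Z_{q_k}\cdots Z_{q-1})F^{i-1}$. The identity $\pi({}_1(UZ_t\cdots Z_{q-1})_i)=M\cdot R$ (immediate from $\pi(U)=M\pi(W(t))$, up to the obvious $L\leftrightarrow N$-swap when $e=1$) forces $R\in M\m\Lcal\subseteq\Sigma$, and since the $A$-product is already in $\Sigma$, also $P\in\Sigma$. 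A case analysis over $(j_k,i)\in\set{1,2}^2$, using $FLF=N$ and $FNF=L$, identifies $P$ with the label of a concrete vertex of $\Gcal_T$---either $v_q$, the vertex reached from $1$ by reading $Z_0\cdots Z_{q-1}$, or its $F$-twin, possibly with an $L\leftrightarrow N$-swap of the residue when $j_k=2$. Hence that vertex lies in $\varphi\m\set{1}$, i.e.\ $P=P_r$ for some~$r$; the calculation $\beta=R\m*\alpha=P\m(A_{a_0}\cdots A_{a_{k-1}})\m*\alpha=P\m*T^k(\alpha)$ completes $(\dagger)$. The main obstacle is precisely this $F$-factor bookkeeping across the four $(j_k,i)$-cases, but in each case the formula for $P$ reduces---by $F$-conjugation---to a plain vertex label of $\Gcal_T$, after which everything else follows from Lemma~\ref{ref29} and the hypothesis.
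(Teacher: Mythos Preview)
Your proof is correct and follows essentially the same route as the paper's: both directions hinge on Lemma~\ref{ref29} together with the observation---extracted from the proof of Theorem~\ref{ref20}---that the residual factor $P=\pi({}_{j_k}(Z_{q_k}\cdots Z_{q-1})_i)$ obtained by splitting at the last hitting time is the label of a vertex of $\Gcal_T$ lying in $\varphi^{-1}\{1\}$. The only cosmetic difference is that the paper argues the reverse implication by contrapositive (assume Serret fails, produce $P_r$ and $\alpha$ with $P_r*\alpha\not\equiv_t\alpha$), whereas you argue it directly via $(\dagger)$ and the inverse form of $(\star)$; the underlying computation is identical.
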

\begin{proof}
The left-to-right direction is clear from Lemma~\ref{ref29}, noting that the stopping of $\Fcal^*_T$ amounts to $\Fcal_T$ entering state $P_1$. For the reverse direction, we assume that the Serret theorem fails for $T$ and construct $\alpha$ and $P_r$ such that $\alpha$ and $\beta:=P_r*\alpha$ have different $T$-tails (this implies $P_r\not= P_1$, so that $P_r$ is a state of $\Fcal^*_T$). By the proof of Theorem~\ref{ref20}, there exist an irrational number $\gamma=\pi(\Zbf)$, an integer $q>0$, and an exponent $e\in\set{0,1}$ such that:
\begin{itemize}
\item $\varphi(1Z_0\cdots Z_{q-1}F^e)=1$;
\item the tail of $\alpha:=\pi({}_{1F^e}Z_qZ_{q+1}\cdots)$ is different from the tail of $\gamma$.
\end{itemize}
Let $t\ge0$ be the greatest integer $<q$ such that $i:=1Z_0\cdots Z_{t-1}$ belongs to $\set{1,2}$. Then $\pi\bigl({}_i(Z_t\cdots Z_{q-1})_{1F^e}\bigr)\in\Sigma$, because
$\pi\bigl({}_1(Z_0\cdots Z_{q-1})_{1F^e}\bigr)\in\Sigma$
and
$\pi\bigl({}_1(Z_0\cdots Z_{t-1})_i\bigr)\in\Lcal$.
Therefore the state $iZ_t\cdots Z_{q-1}F^e$ belongs to $\varphi\m\set{1}$; say it is equal to $P_r$.
Now, $\beta:=P_r*\alpha=\pi({}_iZ_tZ_{t+1}\cdots)$ has the same tail as $\gamma$ (because $\pi\bigl({}_1(Z_0\cdots Z_{t-1})_i\bigr)\in\Lcal$), and thus has tail different from $\alpha$.
\end{proof}

It is easy to check that the graph $\Fcal^*_{T'}$ determined by the algorithm $T'$ of Example~\ref{ref26} contains just two states, $P_2=N$ and $P_3=NN$, each carrying a loop labelled $3|3$. The condition in Corollary~\ref{ref27} clearly holds, thus $T'$ has the tail property.

\section{Synchronizing words}\label{ref5}

Some algorithms fail the tail property in a very fragile way, in the sense that for Lebesgue almost every input $\alpha$ the tail- and $\Sigma$-equivalence classes of $\sigma\m(\alpha)$ coincide. This is surely the case when the graph $\Gcal_T$ is \newword{synchronizing}, i.e., admits a synchronizing word.
A \newword{synchronizing word} for a deterministic transducer is an input word~$W$ that resets the transducer, that is, leaves it in the same state, no matter which state we started with: $vW=uW$ for every two states $v,u$~\cite{volkov08}.

\begin{theorem}
Let\label{ref23} $\Gcal^1_T$ be the connected component of $1$ in $\Gcal_T$; it equals all of $\Gcal_T$ iff $\Sigma_T\not\le\Gamma$. Assume that 
$\Gcal^1_T$ is synchronizing. Then the set of $\alpha\in\Rbb\pp\setminus\Qbb$ whose $\Sigma_T$-equivalence class is partitioned in more than one tail-equivalence classes has Lebesgue measure $0$.
\end{theorem}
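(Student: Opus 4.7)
The proof proceeds in two stages: first the structural characterization of $\Gcal^1_T$, and then the Lebesgue-a.e.\ argument based on synchronization.

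\textbf{Structural equivalence.} Recall from Definition~\ref{ref17} that the vertices of $\Gcal_T$ split naturally into ``upper'' ones of the form $B\in\Mcal$ and ``lower'' ones of the form $BF$, pairwise joined by $F$-edges. The directed $L,N$-edges inherited from $\TTcal$ respect this split: the target of an $L$- or $N$-edge lies in the same tree as its source. The gluings of Definition~\ref{ref17} preserve the split when $e(a)=0$ (upper leaf $B_a$ is identified with the upper root $1$, lower leaf $B_aF$ with $2$) and cross-link the two trees when $e(a)=1$. Hence the set of vertices reachable from $1$ along directed $L,N$-edges coincides with the full $\Gcal_T$ iff some $e(a)=1$, iff some $A_a=B_aF^{e(a)}$ has determinant $-1$, iff $\Sigma_T\not\le\Gamma$.

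\textbf{The almost-everywhere statement.} Fix a synchronizing word $W\in\set{L,N}^*$ for $\Gcal^1_T$, i.e., one for which reading $W$ from any state of $\Gcal^1_T$ leads to a common state $v_W$. Let $\Xi$ be the set of $\alpha\in\Rbb\pp\setminus\Qbb$ whose $\pi$-preimage $\Zbf\in\set{L,N}^\omega$ does not contain $W$ as a subword. Since the classical Gauss map is ergodic with respect to the Gauss measure, which is equivalent to Lebesgue, for Lebesgue-a.e.\ $\alpha$ the sequence $\Zbf$ contains every finite $L,N$-word infinitely often; hence $\Xi$ is Lebesgue-null.

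Now let $\alpha\notin\Xi$ and $\gamma\equiv_\Sigma\alpha$. The second part of the proof of Theorem~\ref{ref20} produces $q\ge 0$ and $i\in\set{1,2}$ with $\pi\bigl({}_1(Z_0\cdots Z_{q-1})_i\bigr)\in\Sigma$ such that $\gamma\equiv_t\beta:=\pi({}_i Z_q Z_{q+1}\cdots)$. The $T$-symbolic orbits of $\alpha$ and of $\beta$ are, respectively, the outputs of $\Gcal_T$ started at state $1$ and at state $i$, reading in lock-step the same letters $Z_q,Z_{q+1},\ldots$ from time $q$ on. Both initial states lie in $\Gcal^1_T$: trivially for $1$, and a restart at $i=2$ demands $\varphi(vF)=1$ for some vertex $v$ reached by the original run. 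Either $v$ is an upper vertex and $vF\in\Sigma$ has determinant $-1$, or $v$ is a lower vertex, in which case the original run has already crossed into the lower tree; either way $\Sigma_T\not\le\Gamma$, so by Part~1 we have $\Gcal^1_T=\Gcal_T\ni 2$. Because $\Gcal^1_T$ is forward-invariant under $L,N$-edges, both runs remain in $\Gcal^1_T$ throughout. By $\alpha\notin\Xi$, the word $W$ occurs in $\Zbf$ beginning at some position $t^*\ge q$; after reading it both runs are at $v_W$, so from time $t^*+\abs{W}$ on they produce identical outputs, whence $\alpha\equiv_t\beta\equiv_t\gamma$, as required.

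\textbf{Main obstacle.} The delicate step is verifying that the two runs actually lie inside $\Gcal^1_T$ so that the synchronizing assumption is applicable; this is precisely where the determinantal dichotomy of Part~1 is invoked (in particular, handling the possible restart at $i=2$). The ergodic statement that for a.e.\ $\alpha$ the expansion $\Zbf$ contains any prescribed finite word is classical, and once the containment inside $\Gcal^1_T$ is secured, the defining property of a synchronizing word closes the argument.
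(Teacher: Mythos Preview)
Your proof is correct and follows the paper's strategy: show that Lebesgue-a.e.\ $\Zbf$ contains the synchronizing word, then feed this into the machinery of Theorem~\ref{ref20} to force the two transducer runs to merge. The paper obtains the a.e.\ statement via conservativity and ergodicity of the slow map $R=\{L,N\}$ with respect to the infinite invariant measure $\ud x/x$ rather than via the Gauss map, and compresses your explicit synchronization paragraph into the single line ``follows from the proof of Theorem~\ref{ref20}''; you additionally supply an argument for the structural claim $\Gcal^1_T=\Gcal_T\Leftrightarrow\Sigma_T\not\le\Gamma$, which the paper states without proof. One small cleanup: your justification that a restart at $i=2$ forces $\Sigma_T\not\le\Gamma$ invokes the condition $\varphi(vF)=1$ from the \emph{first} part of the proof of Theorem~\ref{ref20}, whereas you are actually using the \emph{second} part; the direct argument is simply that $\pi\bigl({}_1(Z_0\cdots Z_{q-1})_2\bigr)=\pi(Z_0\cdots Z_{q-1})F\in\Sigma$ has determinant $-1$.
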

\begin{proof}
Let $R:\Rbb\pp\setminus\Qbb\to\Rbb\pp\setminus\Qbb$ be the slow map of Example~\ref{ref16}, explicitly given by
\[
R(x)=\begin{cases}
x/(1-x),&\text{if $x\in\Delta_0=[0,1]$;}\\
x-1,&\text{if $x\in\Delta_1=[1,\infty]$.}
\end{cases}
\]
Then $R$ preserves the $\sigma$-finite, infinite measure $\ud\mu=\ud x/x$, and is conservative and ergodic w.r.t.~it (see, e.g., \cite{isola11}).
Let $W=Z_0\cdots Z_{s-1}$ be a synchronizing word for~$\Gcal^1_T$, let $a(t)$ be $0$ or $1$ according whether $Z_t$ is $L$ or $N$, and let $B=\bigcap_{t=0}^{s-1}R^{-t}\Delta_{a(t)}$. The conservativity and ergodicity of $R$ easily imply (this is really a version of the Poincar\'e recurrence theorem) that $\mu$-all points enter any set of positive $\mu$-measure infinitely often. Since:
\begin{itemize}
\item[(i)] $\mu(B)>0$,
\item[(ii)] $\mu$ and the Lebesgue measure have the same nullsets,
\item[(iii)] $\pi:(\Zcal^\omega,\text{shift})\to(\Rbb\pp\setminus\Qbb,R)$ is a measurable conjugacy,
\end{itemize}
we conclude that for Lebesgue-all $\alpha=\pi(\Zbf)$ the input $\Zbf$ to $\Gcal_T$ contains $W$ infinitely often.
Our statement then follows from the proof of Theorem~\ref{ref20}.
\end{proof}

We refer to~\cite[Corollary~4.3]{katokugarcovici12} for a result in the same vein, albeit stated in a different context and proved with different means.

Theorem~\ref{ref23} applies, e.g., to the map $T$ in Example~\ref{ref26}, whose $\Gcal^1_T$ is in Figure~\ref{fig14}.
\begin{figure}[h!]
\begin{tikzpicture}[scale=1.2]
\node (1) at (0,0)  []  {$1$};
\node (3)  at (2,0) [vertex1]  {};
\node (5)  at (4,0) [vertex1]  {};
\path
(1) edge[Nedge,out=150,in=-150,loop] node[left] {$L|0$} (1)
(1) edge[Nedge,out=-30,in=-150] node[below,inner sep=1pt] {$N|$} (3)
(3) edge[Nedge,out=150,in=30] node[above,inner sep=1pt] {$N|3$} (1)
(3) edge[Nedge] node[above,inner sep=1pt] {$L|$} (5)
(5) edge[Nedge,out=-120,in=-60] node[below,inner sep=1pt] {$N|2$} (1)
(5) edge[Nedge,out=120,in=60] node[above,inner sep=1pt] {$L|1$} (1);
\end{tikzpicture}
\caption{}
\label{fig14}
\end{figure}
The word $LL$ is synchronizing for this graph, resetting it
to state~$1$. Note that the exceptional points 
$\alpha=\sqrt{3}$, $\beta=\sqrt{3}+1$ 
are the $\pi$-images of the sequences $\overline{NLN}$, $\overline{NNL}$, which avoid $LL$, as well as any other synchronizing word.

Again, Example~\ref{ref21} provides a much more robust counterexample to the Serret theorem. Indeed, explicit computation (which is not trivial, since $\Gcal_T$ has $14$ vertices, so that the synchronizability criterion in~\cite[Proposition~1]{volkov08} involves a graph with $105$ vertices) shows that the associated transducer does not admit any synchronizing word.


\end{document}